\pgfplotsset{compat=1.18} 
\newcommand{\C}{\mathbb{C}}
\newcommand{\cE}{\mathcal{E}}
\newcommand{\norm}[1]{\lVert #1 \rVert}
\newcommand{\abs}[1]{| #1 |}
\newcommand{\bv}{\mathbf{v}}
\newcommand{\bw}{\mathbf{w}}
\newcommand{\tr}{\text{tr}}
\newtheorem{thm}{Theorem}[section]
\newtheorem{lem}[thm]{Lemma}
\newtheorem{prop}[thm]{Proposition}
\newtheorem{cor}[thm]{Corollary}
\newtheorem{ques}[thm]{Question}
\newtheorem{conj}[thm]{Conjecture}
\theoremstyle{definition}
\newtheorem{exm}[thm]{Example}
\newtheorem{remark}[thm]{Remark}
\newtheorem{defn}[thm]{Definition}
\numberwithin{equation}{section}
\begin{document}

\title[A Spectral Lower Bound on Chromatic Numbers using $p$-Energy]{A Spectral Lower Bound on Chromatic Numbers using $p$-Energy}

\author[Clive Elphick]{Clive Elphick}
\author[Quanyu Tang]{Quanyu Tang}
\author[Shengtong Zhang]{Shengtong Zhang}

\address{School of Mathematics, University of Birmingham, Birmingham, UK}
\email{clive.elphick@gmail.com}

\address{School of Mathematics and Statistics, Xi'an Jiaotong University, Xi'an 710049, P. R. China}
\email{tang\_quanyu@163.com}

\address{Department of Mathematics, Stanford University, Stanford, CA 94305, USA}
\email{stzh1555@stanford.edu}

\subjclass[2020]{05C15, 05C50} 

\keywords{Spectral graph theory; Graph coloring; Chromatic number; $p$-Energy; Quantum information}

\begin{abstract}
Let $A_G $ be the adjacency matrix of a simple graph $ G $, and let $ \chi(G) $, $ \chi_f(G) $, $ \chi_q(G) $, $ \xi(G) $ and $ \xi_f(G) $ denote its chromatic number, fractional chromatic number, quantum chromatic number, orthogonal rank and projective rank, respectively. For $ p \geq 0 $, we define the positive and negative $ p $-energies of $ G $ by
$$
\mathcal{E}_p^+(G) = \sum_{\lambda_i > 0} \lambda_i^p, \quad 
\mathcal{E}_p^-(G) = \sum_{\lambda_i < 0} |\lambda_i|^p,
$$
where $ \lambda_1 \geq \cdots \geq \lambda_n $ are the eigenvalues of $A_G $. We prove that for all $ p \geq 0 $,
$$
\chi(G) \geq \left\{\chi_f(G), \chi_q(G), \xi(G) \right\} \geq \xi_f(G) \geq 1 + \max\left\{ \frac{\mathcal{E}_p^+(G)}{\mathcal{E}_p^-(G)}, \frac{\mathcal{E}_p^-(G)}{\mathcal{E}_p^+(G)} \right\}^{\frac{1}{|p - 1|}}.
$$
This result unifies and strengthens a series of existing bounds corresponding to the cases $ p \in \{0, 2, \infty\} $. In particular, the case $ p = 0 $ yields the inertia bound
$$
\chi_f(G)  \geq \xi_f(G) \geq1 + \max\left\{\frac{n^+}{n^-}, \frac{n^-}{n^+}\right\},
$$
where $ n^+ $ and $ n^- $ denote the number of positive and negative eigenvalues of $ A_G $, respectively. This resolves two conjectures of Elphick and Wocjan.

We also demonstrate that for certain graphs, non-integer values of $ p $ provide sharper lower bounds than existing spectral bounds. As an example, we determine $ \chi_q $ for the Tilley graph, which cannot be achieved using existing (unweighted) $p$-energy bounds. Our proof employs a novel synthesis of linear algebra and measure-theoretic tools, which allows us to surpass existing spectral bounds.
\end{abstract}

\maketitle

\section{Introduction}\label{secIntroduction}

We begin by recalling some fundamental concepts and notations. All graphs considered in this paper are assumed to be \emph{simple}, meaning they are undirected and contain no loops or multiple edges. Let \( G = (V, E) \) be a simple graph with \( n \) vertices and \( m \) edges. The \emph{adjacency matrix} of \( G \), denoted by \(A_G = (a_{ij})_{i,j=1}^n \), is an \( n \times n \) symmetric matrix where \( a_{ij} = 1 \) if the vertices \( v_i \) and \( v_j \) are adjacent, and \( a_{ij} = 0 \) otherwise. Since \(A_G \) is real symmetric, all its eigenvalues are real and can therefore be arranged in non-increasing order:
\[
\lambda_1(A_G) \geq \lambda_2(A_G) \geq \cdots \geq \lambda_n(A_G).
\]

For a real number \( p > 0 \), the \emph{\( p \)-energy} of \( G \) is defined as
\[
\mathcal{E}_p(G) = \sum_{i=1}^{n} |\lambda_i(A_G)|^p.
\]
When \( p = 1 \), this reduces to the classical graph energy \( \mathcal{E}(G) \), a quantity originally introduced in the context of theoretical chemistry. Over time, graph energy has become an active area of research in spectral graph theory; see \cite{Gutman2001} for a comprehensive survey. In recent years, the study of higher-order energies \( \mathcal{E}_p(G) \) has attracted increasing attention, leading to a variety of intriguing problems and conjectures; see~\cite{Elphick2016, Nikiforov2012, Nikiforov2016, Tang2025} for further details.

For a complex matrix \( X \in \mathbb{C}^{m \times n} \), let \( t = \min\{m, n\} \). We denote by \( s(X)=\{s_j(X)\}_{j=1}^t \) the sequence of \emph{singular values} of \( X \), i.e., the eigenvalues of the positive semi-definite matrix \( |X| = (X^*X)^{1/2} \), arranged in non-increasing order, where \( X^* \) stands for the \emph{conjugate transpose} of \( X \). For any \( p > 0 \), the \emph{Schatten \( p \)-norm} of \( X \) is defined by
\[
\|X\|_p = \left( \sum_{j=1}^t s_j(X)^p \right)^{1/p} = \left( \operatorname{tr} |X|^p \right)^{1/p},
\]
where \( \operatorname{tr} \) denotes the standard trace functional. This expression defines a norm on \( \mathbb{C}^{m \times n} \) for \( 1 \leq p < \infty \), and a quasi-norm when \( 0 < p < 1 \). If \( G \) is a graph with adjacency matrix \(A_G \), we denote \( \|G\|_p := \|A_G\|_p \) for brevity. Clearly, the \( p \)-energy \( \mathcal{E}_p(G) \) of a graph \( G \) satisfies \(\mathcal{E}_p(G) = \|G\|_p^p\). 


The \emph{inertia} of a graph \( G \) is the ordered triple \( (n^+, n^0, n^-) \), where \( n^+ \), \( n^0 \), and \( n^- \) denote the number (counted with multiplicities) of positive, zero, and negative eigenvalues of \(A_G \), respectively. For any real number \( p \geq 0 \), we define the \emph{positive \( p \)-energy} and \emph{negative \( p \)-energy} of \( G \) as
\[
\mathcal{E}_p^+(G) = \sum_{i=1}^{n^+} \lambda_i^p(A_G), \quad
\mathcal{E}_p^-(G) = \sum_{i=n - n^- + 1}^{n} |\lambda_i(A_G)|^p.
\] In particular, when \( p = 0 \), these quantities reduce to
\begin{equation}\label{eq1}
\mathcal{E}_0^+(G) = n^+, \quad \mathcal{E}_0^-(G) = n^-,    
\end{equation}that is, the positive and negative \( 0 \)-energies simply count the number of positive and negative eigenvalues of \(A_G \), respectively. The study of positive and negative \( p \)-energies of graphs was initiated by Tang, Liu, and Wang~\cite{Tang2024}, and was further developed more recently by Akbari, Kumar, Mohar, and Pragada~\cite{Akbari2025}.

The \emph{chromatic number} \( \chi(G) \) of a graph \( G \) is defined as the minimum number of colors needed to color its vertices such that no two adjacent vertices share the same color. Determining \( \chi(G) \), or even approximating it within reasonable accuracy, is known to be NP-hard~\cite{Garey1978, Hastad1999}. Consequently, much research has focused on establishing upper and lower bounds for \( \chi(G) \). While upper bounds are typically obtained by constructing explicit colorings, lower bounds require proving that no proper coloring with fewer colors exists.

We will also consider several well-studied variants and relaxations of the chromatic number \( \chi(G) \): the fractional chromatic number \( \chi_f(G) \), the quantum chromatic number \( \chi_q(G) \), the orthogonal rank \( \xi(G) \), and the projective rank \( \xi_f(G) \).

The fractional chromatic number of \( G \) is defined as follows.
\begin{defn}
    Let \( G \) be a graph on \( n \) vertices. Denote by \( \mathcal{I}(G) \) the set of all independent sets of \( G \), and for each vertex \( v \in V(G) \), let \( \mathcal{I}(G, v) \subseteq \mathcal{I}(G) \) denote the subset consisting of those independent sets that contain \( v \). A \emph{fractional coloring} is a map \( \sigma: \mathcal{I}(G) \to \mathbb{R}_{\geq 0} \) such that
    \[
    \sum_{J \in \mathcal{I}(G,v)} \sigma_J \geq 1, \quad \forall v \in V(G).
    \]
    The \emph{fractional chromatic number} \( \chi_f(G) \) is defined as the minimum value of
    \[
    \sum_{J \in \mathcal{I}(G)} \sigma_J
    \]
    over all fractional colorings \( \sigma \).
\end{defn}

The classical chromatic number \( \chi(G) \) corresponds to the case where \( \sigma_J \in \{0, 1\} \) for every \( J \in \mathcal{I}(G) \), and hence \( \chi(G) \geq \chi_f(G) \). It is known that fractional chromatic numbers can be significantly smaller than their classical counterparts; for example, Kneser graphs can have \( \chi_f(G) \leq 3 \) while \( \chi(G) \) can be arbitrarily large. 

The \emph{quantum chromatic number} \( \chi_q(G) \) was introduced by Cameron et al.~\cite{cameron07}. For a more detailed overview and discussion, we refer the reader to~\cite{Elphick2019}. In this paper, we present the following equivalent but purely combinatorial definition of the quantum chromatic number, due to~\cite[Definition~1]{mancinska162}. For a positive integer \( r \), let \( [r] \) denote the set \( \{1, 2, \ldots, r\} \). For \( d > 0 \), let \( I_d \) and \( O_d \) denote the identity and zero matrices in \( \mathbb{C}^{d \times d} \), respectively. When the dimension is clear from context, we simply write \( I \) and \( O \) to denote the identity and zero matrices, respectively.

\begin{defn}\label{def1}
A \emph{quantum $r$-coloring} of a graph $G=(V,E)$  is a collection of orthogonal projectors\footnote{Throughout this paper, we use the terms \emph{orthogonal projection matrix} and \emph{orthogonal projector} interchangeably.} $\{ P_{v,k} : v\in V, k\in [r]\}$ in 
$\C^{d\times d}$ such that
\begin{itemize}
\item for all vertices $v\in V$
\begin{eqnarray*}
\sum_{k\in[r]} P_{v,k} & = & I_d \quad\quad \mathrm{(completeness)} 
\end{eqnarray*}
\item for all edges $vw\in E$ and for all $k\in[r]$
\begin{eqnarray*}
P_{v,k} P_{w,k}           & = & O_d \quad\quad \mathrm{(orthogonality)}
\end{eqnarray*}
\end{itemize}
The \emph{quantum chromatic number} $\chi_q(G)$ is the smallest $r$ for which the graph $G$ admits a quantum $r$-coloring for some dimension $d>0$.
\end{defn}

The classical chromatic number \( \chi(G) \) corresponds to the case \( d = 1 \) in Definition~\ref{def1}. Therefore, it is straightforward that \( \chi(G) \ge \chi_q(G) \) for all graphs. For some graphs, \( \chi_q(G) \) can be exponentially smaller than \( \chi(G) \), and Ciardo~\cite{Ciardo2025} has discussed the likelihood that there exist graphs with $\chi_q(G) = 3$ and $\chi(G)$ unbounded. 


The definition of the orthogonal rank \( \xi(G) \) can be found in~\cite{Wocjan2019}. 
\begin{defn}
The \emph{orthogonal rank} of a graph \( G = (V, E) \) is the smallest positive integer \( \xi(G) \) such that there exists an \emph{orthogonal representation}, that is, a collection of non-zero column vectors \( x_v \in \mathbb{C}^{\xi(G)} \) for \( v \in V \) satisfying the orthogonality condition
\[
x_v^* x_w = 0 \quad \text{for all } vw \in E.
\]
\end{defn}

In this paper, we focus on the \emph{projective rank} \( \xi_f(G) \), which we define below.

\begin{defn}\label{def:projectiverank}
A \emph{\( d/r \)-representation} of a graph \( G = (V, E) \) is a collection of rank-\( r \), \( d \times d \) orthogonal projectors \( \{P_v\}_{v \in V(G)} \) such that \( P_v P_w = O \) for all edges \( vw \in E \). The \emph{projective rank} of \( G \) is defined as
\[
\xi_f(G) = \inf_{d, r} \left\{ \frac{d}{r} : G \text{ has a } d/r \text{-representation} \right\}.
\]
\end{defn}

The concept of projective rank was introduced by Man\v{c}inska and Roberson~\cite{Mancinska2016}. For additional background, properties, and applications, we refer the reader to~\cite{Hogben2017, Roberson2013}. Furthermore, we note that the orthogonal rank \( \xi(G) \) corresponds to the special case \( r = 1 \) in Definition~\ref{def:projectiverank}, that is,
\[
\xi(G) = \min \left\{ d : G \text{ admits a } d/1 \text{-representation} \right\}.
\]
It then follows that \( \xi_f(G) \leq \xi(G) \) for all graphs.

It is also worth noting that there exist graphs with exponential gaps between \( \xi_f(G) \) and \( \chi_f(G) \). For example, for the orthogonality graph \( \Omega(n) \), we have \( \xi_f(G) = n \) while \( \chi_f(G) \) grows exponentially in \( n \); see~\cite[Section~7]{Wocjan2019}.

The five graph parameters discussed above satisfy the following quantitative relationships: \begin{equation}\label{eq:fiverelationships}
\chi(G) \geq \max\left\{ \chi_q(G), \chi_f(G), \xi(G) \right\} \geq \min\left\{ \chi_q(G), \chi_f(G), \xi(G) \right\} \geq \xi_f(G).
\end{equation} Proofs of these inequalities can be found in, for example,~\cite[Section~6.1]{Mancinska2016} and~\cite[Section~2]{Wocjan2019}. Furthermore, Man\v{c}inska and Roberson showed that \( \xi(G) \) and \( \chi_q(G) \) are incomparable, as are \( \chi_f(G) \) and \( \chi_q(G) \), and \( \chi_f(G) \) and \( \xi(G) \); see~\cite{Mancinska2016, mancinska162} for further details.

For two graph parameters \( f \) and \( g \), we say that \( f \leq g \) if \( f(G) \leq g(G) \) for all graphs \( G \). The Hasse diagram in Figure~\ref{fig:hasse1}, adapted from~\cite[Fig.~1]{Mancinska2016}, illustrates the known partial order among the five graph parameters considered in this paper.

\begin{figure}[H] 
\begin{tikzpicture}
[vx/.style = {inner sep = 3pt, text height=1.5ex,text depth=.25ex}];

\def\hstep{-1};
\def\wstep{1.5};
\def\wstepn{1.1};

\foreach \i/\name in
         {1/$\chi$,2/$\chi_q$,3/$\xi_f$}{
\node[vx](\i) at (0,\i*\hstep) {\name};}

\node[vx](2l) at (-1*\wstep,2*\hstep) {$\chi_f$};
\node[vx](2r) at (\wstep,2*\hstep) {$\xi$};

\draw (1) -- (2) -- (3);
\draw (1) -- (2l) -- (3);
\draw (1) -- (2r) -- (3);

\end{tikzpicture}
\caption{Partial order of the five graph parameters.}
\label{fig:hasse1}
\end{figure}
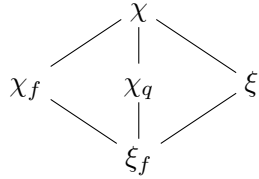

Spectral graph theory provides powerful tools for bounding the chromatic number, as the eigenvalues of the adjacency matrix encode global structural information about the graph, in contrast to local information such as vertex degrees.  One of the most celebrated spectral lower bounds is due to Hoffman~\cite{Hoffman1970}, which relates the chromatic number to the largest eigenvalue \( \lambda_1 \) and the smallest eigenvalue \( \lambda_n \) of the adjacency matrix:
\begin{equation}\label{eqhoffman}
\chi(G) \geq 1 + \frac{\lambda_1}{-\lambda_n}.
\end{equation}

In 2015, Ando and Lin~\cite{Ando2015} confirmed a conjecture of Wocjan and Elphick~\cite{Wocjan2013}, providing a novel spectral lower bound on the chromatic number using the positive and negative square energies of a graph:

\begin{thm}[\cite{Ando2015}]\label{prethm1}
Let \( \chi(G) \) be the chromatic number of a graph \( G \). Then
\[
  \chi(G) \geq 1 + \max\left\{ \frac{\mathcal{E}_2^+(G)}{\mathcal{E}_2^-(G)}, \frac{\mathcal{E}_2^-(G)}{\mathcal{E}_2^+(G)} \right\}.
\]
\end{thm}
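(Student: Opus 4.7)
The plan is to use the coloring to reduce the theorem to a purely matrix-theoretic inequality and then prove that inequality. Let $k = \chi(G)$ and fix a proper $k$-coloring with independent color classes $V_1, \ldots, V_k$. Reorder the vertices so that each $V_i$ occupies a contiguous block of indices; independence of the color classes forces all diagonal blocks of $A_G$ to vanish. Writing $A_G = T + T^*$ where $T$ is the strict upper block-triangular part with respect to this ordering, the $k$-block structure yields $T^k = O$, i.e.\ $T$ is nilpotent of index at most $k$. The theorem then reduces to the following matrix inequality: for any complex square matrix $T$ with $T^k = O$, the Hermitian matrix $A := T + T^*$ satisfies
\[
\mathcal{E}_2^+(A) \leq (k-1)\,\mathcal{E}_2^-(A) \quad\text{and}\quad \mathcal{E}_2^-(A) \leq (k-1)\,\mathcal{E}_2^+(A).
\]
Dividing gives $\max\{\mathcal{E}_2^+(A)/\mathcal{E}_2^-(A),\, \mathcal{E}_2^-(A)/\mathcal{E}_2^+(A)\} \leq k - 1 = \chi(G) - 1$, which is exactly the statement of the theorem.

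To prove this matrix inequality I would proceed by induction on $k$. The base case $k = 2$ is immediate: $T^2 = O$ forces $T$ to be unitarily similar to the nilpotent form $\bigl(\begin{smallmatrix} 0 & R \\ 0 & 0 \end{smallmatrix}\bigr)$, so $A$ is unitarily similar to $\bigl(\begin{smallmatrix} 0 & R \\ R^* & 0 \end{smallmatrix}\bigr)$, whose spectrum is exactly $\{\pm \sigma_j(R)\}$. In particular $\mathcal{E}_2^+(A) = \mathcal{E}_2^-(A)$, matching $k - 1 = 1$. For the inductive step I would peel off the last block:
\[
T = \begin{pmatrix} T_1 & X \\ 0 & 0 \end{pmatrix},\qquad A = \begin{pmatrix} A_1 & X \\ X^* & 0 \end{pmatrix},\qquad A_1 := T_1 + T_1^*,
\]
where $T_1^{k-1} = O$, so the inductive hypothesis applies to $A_1$. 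The total energy expands cleanly as $\tr(A^2) = \tr(A_1^2) + 2\|X\|_F^2$, while Cauchy interlacing compares the eigenvalues of $A$ to those of $A_1$ index by index. The task is then to show that the additional $2\|X\|_F^2$ of energy introduced by the coupling $X$ distributes across the sign boundary just so, causing the allowable ratio to worsen by at most one step, from $k-2$ to $k-1$.

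This inductive propagation is where I expect the main difficulty: Cauchy interlacing controls individual eigenvalues but not the half-line partial sums $\sum_i \lambda_i^2\,\mathbf{1}[\lambda_i > 0]$, so a sharper tool is needed, most likely a Schur-complement computation exploiting the zero lower-right block, or a variational formula expressing $\mathcal{E}_2^\pm(A)$ as a trace optimum of the form $\tr(\Pi A \Pi A)$ over PSD projectors $\Pi$ adapted to the block structure. A useful sanity check throughout is the extremal case $G = K_k$, whose adjacency matrix has spectrum $\{k-1\} \cup \{-1\}^{k-1}$ and realizes $\mathcal{E}_2^+/\mathcal{E}_2^- = k - 1$ with equality, so the induction must preserve sharpness precisely in this extremal configuration. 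A natural parallel line of attack worth developing, inspired by the Hoffman ($p=1$) proof, is to exploit the phase-averaging identity $\sum_{l=0}^{k-1} D_l A D_l^* = O$, where $D_l = \sum_i \omega^{li} P_i$ with $\omega = e^{2\pi i/k}$ and $P_i$ the coordinate projector onto $V_i$; this writes $A$ as a signed sum of $k-1$ unitary conjugates of itself and, via elementary PSD-order manipulations, yields a clean majorization recovering the $p=1$ bound. For $p=2$ the same manipulations deliver only $\mathcal{E}_2^+ \leq (k-1)^2\mathcal{E}_2^-$, so a genuine refinement, leveraging the specific $\omega^l$ phase structure rather than treating the conjugates as generic Hermitians, would be required to save the missing factor of $k-1$.
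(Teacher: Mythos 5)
Your reduction is sound: a proper $k$-colouring makes every diagonal block of $A_G$ vanish, so $A_G = T + T^*$ with $T$ strictly block upper-triangular and $T^k = O$; conversely, any $T$ with $T^k = O$ is unitarily similar to such a form via the kernel flag $\ker T \subseteq \ker T^2 \subseteq \cdots$, so your target inequality is exactly the matrix form of the Ando--Lin theorem. The base case $k=2$ is also correct. The difficulty is that the entire content of the theorem lies in the step you leave open. Your induction must control how the added energy $2\norm{X}_2^2$ splits across the sign boundary, and, as you yourself note, Cauchy interlacing cannot do this: it compares individual eigenvalues of $A_1$ and $A$ but says nothing about the signed partial sums $\mathcal{E}_2^{\pm}$. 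No Schur-complement or variational substitute is actually supplied, and your fallback phase-averaging argument is conceded to lose a factor of $k-1$. What you have is therefore a correct reformulation plus a plan, not a proof.

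For comparison, the known argument (Ando--Lin, recovered in this paper as the $p=2$ case of Theorem~\ref{thmxifmcase} via Proposition~\ref{lem:fractional-decomposition}) avoids any induction on the number of blocks. One writes $A_G = B - C$ with $B = (1+\epsilon)A_G^+$ and $C = \epsilon A_G^+ + A_G^-$, both positive semi-definite; since the diagonal blocks of $A_G$ vanish, $B$ and $C$ have identical diagonal blocks. For positive semi-definite matrices partitioned into $k \times k$ blocks one has
\[
\norm{B}_2^2 = \sum_{i,j}\norm{B_{ij}}_2^2 \leq \Bigl(\sum_i \norm{B_{ii}}_2\Bigr)^2 \leq k\sum_i \norm{B_{ii}}_2^2
\quad\text{and}\quad
\sum_i \norm{C_{ii}}_2^2 \leq \norm{C}_2^2,
\]
whence $\norm{B}_2^2 \leq k\norm{C}_2^2$; substituting $\norm{B}_2^2=(1+\epsilon)^2\mathcal{E}_2^+$, $\norm{C}_2^2=\epsilon^2\mathcal{E}_2^+ + \mathcal{E}_2^-$ and choosing $\epsilon = 1/(k-1)$ gives $\mathcal{E}_2^+ \leq (k-1)\mathcal{E}_2^-$, with the other ratio obtained by exchanging $A_G^+$ and $A_G^-$. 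In the paper this is precisely the specialization of the stochastic block decomposition to the uniform random choice among the $k$ coordinate projectors onto colour classes. If you want to salvage your route, the missing ingredient is a one-shot norm inequality of this kind for PSD matrices with prescribed diagonal blocks, not an eigenvalue-interlacing induction.
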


Two years after the work of Ando and Lin, Elphick and Wocjan~\cite{Elphick2017} established the first spectral lower bound for the chromatic number that depends solely on the inertia of a graph. In light of \eqref{eq1}, this result can be reformulated in terms of the positive and negative \( 0 \)-energies as follows:

\begin{thm}[\cite{Elphick2017}]\label{prethm2}
Let \( \chi(G) \) be the chromatic number of a graph \( G \). Then
\[
  \chi(G) \geq 1 + \max\left\{ \frac{\mathcal{E}_0^+(G)}{\mathcal{E}_0^-(G)}, \frac{\mathcal{E}_0^-(G)}{\mathcal{E}_0^+(G)} \right\}
  = 1 + \max\left\{ \frac{n^+}{n^-}, \frac{n^-}{n^+} \right\}.
\]
\end{thm}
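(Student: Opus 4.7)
The plan is to exploit a proper $r$-coloring via a unitary scrambling argument built from $r$-th roots of unity. Suppose $\chi(G) = r$ with color classes inducing a map $c \colon V \to [r]$. Set $\omega = e^{2\pi i / r}$, and for each $k \in \{0, 1, \ldots, r-1\}$ define the diagonal unitary matrix $D_k = \operatorname{diag}(\omega^{k\, c(v)})_{v \in V}$ together with the Hermitian matrix $B_k := D_k A_G D_k^*$. Since each $D_k$ is unitary, every $B_k$ is unitarily similar to $A_G$ and therefore shares the inertia triple $(n^+, n^0, n^-)$.

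The key algebraic identity to establish is $\sum_{k=0}^{r-1} B_k = 0$. Indeed, the $(u,v)$-entry of $B_k$ is $\omega^{k(c(u) - c(v))} A_G(u,v)$, and summing over $k$ produces the coefficient $r\, \mathbf{1}[c(u) = c(v)]$. This is nonzero only when $u$ and $v$ share a color, in which case $A_G(u,v) = 0$ because the coloring is proper. Hence the sum vanishes, and we may rewrite $A_G = B_0 = -\sum_{k=1}^{r-1} B_k$.

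Finally I would invoke the subadditivity inequality $n^+(X+Y) \leq n^+(X) + n^+(Y)$ for Hermitian matrices $X, Y$, which follows from Weyl's inequality applied at indices $i = n^+(X)+1$ and $k = n^+(X) + n^+(Y) + 1$. Iterated on $A_G = \sum_{k=1}^{r-1} (-B_k)$, together with $n^+(-B_k) = n^-(B_k) = n^-$, this yields $n^+ \leq (r-1) n^-$, i.e., $r \geq 1 + n^+/n^-$. Applying the same argument to $-A_G$ (which interchanges positive and negative eigenvalues) gives $r \geq 1 + n^-/n^+$, and taking the maximum produces the claimed bound. The creative content lies entirely in the roots-of-unity construction of $D_k$, which transmutes the combinatorial coloring condition into the algebraic identity $\sum_k B_k = 0$; the Weyl subadditivity step is routine. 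No serious obstacle arises for this particular statement, but extending the bound to $\xi_f(G)$ and to arbitrary real $p$---the actual goal of the paper---will presumably require replacing the discrete group $\{\omega^k\}$ by projector-valued measures and combining the Weyl-type combinatorics with spectral calculus for Schatten $p$-norms.
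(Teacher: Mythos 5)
Your proof is correct, and it is essentially the original argument of Elphick and Wocjan in \cite{Elphick2017}, to which the paper attributes this statement: the roots-of-unity conjugations $B_k = D_k A_G D_k^*$ all share the inertia of $A_G$, the properness of the coloring gives $\sum_{k=0}^{r-1} B_k = O$ exactly as you say, and Weyl subadditivity of $n^+$ applied to $A_G = \sum_{k=1}^{r-1}(-B_k)$ yields $n^+ \leq (r-1)n^-$, with the symmetric bound from $-A_G$. However, this is \emph{not} the route the present paper takes to this statement. The paper obtains it as the $p \to 0^+$ limit of its Theorem~\ref{thmxifmcase}, whose proof replaces the discrete pinching by a stochastic block decomposition $Q = \sum_v e_v e_v^* \otimes \Gamma^* P_v \Gamma$ built from a $d/r$-representation and a Haar-random unitary $\Gamma$, splits $A_G = A_G^+ - A_G^-$ into positive semi-definite parts, and controls Schatten $p$-quasi-norms via the majorization $\lambda(M) \prec \beta(M)$, Schur concavity of $\sum x_i^p$ for $0<p<1$, and the trace inequality $\operatorname{tr}((QMQ)^p) \geq \operatorname{tr}(QM^pQ)$. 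The trade-off is exactly what you anticipate at the end: your argument is shorter and entirely elementary, but it only reaches the classical chromatic number $\chi(G)$ (the Weyl subadditivity step has no obvious analogue for a fractional or projective coloring), whereas the paper's measure-theoretic machinery proves the same inertia bound for $\xi_f(G)$ --- and hence for $\chi_f$, $\chi_q$, and $\xi$ --- which is precisely the content of the two conjectures the paper resolves, and does so uniformly for all $p \geq 0$ rather than just $p=0$.
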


In~\cite{Elphick2019}, Elphick and Wocjan showed that many spectral lower bounds for \( \chi(G) \), including the Hoffman bound and the two bounds given in Theorems~\ref{prethm1} and~\ref{prethm2}, also apply to the quantum chromatic number \( \chi_q(G) \). Carli, Silva, Coutinho, and Grandsire~\cite{Silva2021}, as well as Guo and Spiro~\cite{Guo2024}, proved that both the Hoffman bound and Theorem~\ref{prethm1} also apply to the fractional chromatic number \( \chi_f(G) \). In~\cite{Wocjan2019}, Wocjan and Elphick showed that Theorem~\ref{prethm2} holds for the orthogonal rank \( \xi(G) \), and that the Hoffman bound also applies to both \( \xi(G) \) and \( \xi_f(G) \).

While the structures of these lower bounds look similar, previous works rely on ad-hoc methods to prove each individual bound. A natural and intriguing question arises:

\begin{ques}\label{ques1}
For \( p \geq 0 \), what can be said about the relationship between the quantities \( \chi(G) \), \( \chi_f(G) \), \( \chi_q(G) \), $\xi(G)$, $\xi_f(G)$, and
\[
  \max \left\{ \frac{\mathcal{E}_p^+(G)}{\mathcal{E}_p^-(G)}, \frac{\mathcal{E}_p^-(G)}{\mathcal{E}_p^+(G)} \right\}?
\]
\end{ques}

Note that the Hoffman bound can be viewed as the case \( p = \infty \) (see Proposition~\ref{prophoffman} for a detailed explanation). This question aims to unify the three aforementioned spectral lower bounds on the chromatic number and its variants, corresponding to the cases \( p = 0 \), \( p = 2 \), and \( p = \infty \), respectively. The main result of this paper provides a complete answer to this question for all \( p \geq 0 \).

\begin{thm}\label{thm3}
Let \( p \geq 0 \). Then for any non-empty simple graph $G$, we have
$$\chi(G) \geq \left\{\chi_f(G), \chi_q(G), \xi(G)\right\} \geq \xi_f(G) \geq 1 + \max\left\{ \frac{\mathcal{E}_p^+(G)}{\mathcal{E}_p^-(G)}, \frac{\mathcal{E}_p^-(G)}{\mathcal{E}_p^+(G)} \right\}^{\frac{1}{\abs{p - 1}}}.$$
\end{thm}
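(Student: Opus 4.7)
The chain $\chi(G) \ge \{\chi_f(G),\chi_q(G),\xi(G)\} \ge \xi_f(G)$ is already recorded in \eqref{eq:fiverelationships}, so only the $p$-energy lower bound on $\xi_f(G)$ needs proving. Since the right-hand side of the claim is symmetric under swapping $\mathcal{E}_p^+$ and $\mathcal{E}_p^-$, assume without loss of generality that $\mathcal{E}_p^+(G) \ge \mathcal{E}_p^-(G)$; the task then reduces to showing, for every $d/r$-representation $\{P_v\}_{v \in V}$ of $G$, the scalar inequality $(d/r-1)^{|p-1|}\mathcal{E}_p^-(G) \ge \mathcal{E}_p^+(G)$. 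The case $p=1$ is vacuous because $\mathcal{E}_1^+ = \mathcal{E}_1^-$ forces the claimed bound to degenerate to $\xi_f(G)\ge 2$, which holds for any non-empty graph.

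Fix a $d/r$-representation and form the block-diagonal orthogonal projector $P = \sum_{v \in V}(e_v e_v^*) \otimes P_v$ on $\mathbb{C}^n \otimes \mathbb{C}^d$. Two structural facts are central: $P$ has rank $nr$ with partial trace $\tr_{\mathbb{C}^d}(P) = r I_n$, and the edge-orthogonality $P_v P_w = 0$ forces $P(A_G \otimes I_d)P = 0$. Writing $A_G = A_+ - A_-$ as the PSD Jordan decomposition and $\tilde A_\pm := A_\pm \otimes I_d$, this yields $P\tilde A_+ P = P\tilde A_- P =: C \succeq 0$, a PSD operator of rank at most $\min(nr,\,dn^+,\,dn^-)$ whose Schatten $q$-norms are controlled on both sides via the Araki--Lieb--Thirring inequality: $\tr(C^q) \le r\min(\mathcal{E}_q^+, \mathcal{E}_q^-)$ for $q \ge 1$, and the reverse $\tr(C^q) \ge r\max(\mathcal{E}_q^+, \mathcal{E}_q^-)$ for $0 < q \le 1$.

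The measure-theoretic heart of the proof turns this one-parameter family of trace inequalities into the desired $p$-energy bound. Using the layer-cake representations $\mathcal{E}_p^\pm(G) = \int_0^\infty p t^{p-1} N_\pm(t)\,dt$, where $N_\pm(t)$ counts the eigenvalues of $A_G$ above $t$ (respectively below $-t$), pair the dual exponent $q$ of the Araki--Lieb--Thirring bound with the target exponent $p$ through a H\"older-type exchange; equivalently, integrate a pointwise Weyl/Poincar\'e interlacing estimate for the isotropic subspace $\mathrm{range}(P)$ against the spectral measure of $|A_G|$. After this integration, the Schatten-norm equality $\|P\tilde A_+ P\|_q = \|P\tilde A_- P\|_q$ reshapes into the claimed inequality $(d/r-1)^{|p-1|}\mathcal{E}_p^-(G) \ge \mathcal{E}_p^+(G)$.

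The main obstacle is identifying the correct interpolating measure that produces the exponent $1/|p-1|$ on the right-hand side. The three anchor cases --- $p=0$ (the inertia bound, recovered by taking $q\to 0$), $p=2$ (Ando--Lin, at $q=1$), and $p\to\infty$ (Hoffman, in the extremal-eigenvalue limit) --- each correspond to degenerate choices of this measure, and the novelty is to fuse them into a single continuous family that also gives strictly better bounds away from the anchor points. I expect the decisive computation to rely on the log-convexity of $p\mapsto\mathcal{E}_p^\pm$ combined with the partial-trace constraint $\tr_{\mathbb{C}^d}(P) = r I_n$, which is the ingredient that converts inertia-type dimension counts into weighted $p$-energy counts.
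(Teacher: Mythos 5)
Your setup tracks the paper's up to a point: the reduction to bounding $\xi_f(G)$, the block projector built from a $d/r$-representation, the identity $P(A_+\otimes I_d)P = P(A_-\otimes I_d)P$, and the one-sided trace inequalities $\tr((PMP)^q)\le\tr(PM^qP)$ for $q\ge1$ (reversed for $0<q<1$) --- the paper proves exactly these via Jensen's inequality in Lemma~\ref{lem:LHS} rather than citing Araki--Lieb--Thirring, and your computation $\tr(P(A_\pm^q\otimes I_d)P)=r\,\mathcal{E}_q^\pm(G)$ is correct. But there is a genuine gap at the center of the argument. The two families of bounds you derive, $\tr(C^q)\le r\min(\mathcal{E}_q^+,\mathcal{E}_q^-)$ for $q\ge1$ and $\tr(C^q)\ge r\max(\mathcal{E}_q^+,\mathcal{E}_q^-)$ for $0<q\le1$, never hold simultaneously at a single exponent, so they cannot by themselves control the ratio $\mathcal{E}_p^+(G)/\mathcal{E}_p^-(G)$ at any fixed $p$. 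What is missing is the complementary inequality at the \emph{same} exponent: a bound of the form $\tr(C^p)\ge (r/d)^{p-1}\,r\,\mathcal{E}_p^+(G)$ for $p>1$ (reversed for $p<1$), and your third and fourth paragraphs (``layer-cake,'' ``H\"older-type exchange,'' ``correct interpolating measure,'' ``I expect the decisive computation to rely on\dots'') are an acknowledgement that you do not have it; no interpolation across exponents is involved in the actual proof. The paper obtains the missing inequality by (i) replacing your deterministic $P$ with the Haar-randomized $Q=\sum_v e_ve_v^*\otimes\Gamma^*P_v\Gamma$, so that $\mathbb{E}[Q]=\tfrac{r}{d}I_{nd}$ --- your partial-trace identity $\tr_{\mathbb{C}^d}(P)=rI_n$ is strictly weaker and does not support the key step as written; (ii) proving the majorization $\lambda(M)\prec\beta(M)$ with $\beta_j=\tfrac{d}{r}\mathbb{E}[\lambda_j(QMQ)]$ (Lemma~\ref{lem:RHS}); and (iii) applying Schur convexity/concavity of $x\mapsto\sum_i x_i^p$ together with Jensen to get $\|M\|_p^p\le(d/r)^p\,\mathbb{E}\left[\|QMQ\|_p^p\right]$ for $p>1$ (Proposition~\ref{lem:fractional-decomposition}).

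A second, independent omission: your reduction target carries the factor $(d/r-1)^{|p-1|}$, but feeding $A_+$ and $A_-$ directly into the identity $P(A_+\otimes I_d)P=P(A_-\otimes I_d)P$ can only yield $(d/r)^{|p-1|}$. The ``$-1$'' comes from Ando--Lin's perturbation $B=(1+\epsilon)A_G^+$, $C=\epsilon A_G^++A_G^-$ with the eventual choice $\epsilon=1/(\xi_f(G)-1)$, which you never introduce. Two minor points: at $p=1$ the paper does not treat the bound as vacuous but computes $\lim_{p\to1}H(p)=\exp|\alpha_+-\alpha_-|$, which can exceed $1$; and $p=0$ is handled as the limit $p\to0^+$, which your write-up omits.
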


\begin{remark}
We note that the bound in Theorem~\ref{thm3} remains valid for all Hermitian weighted adjacency matrices of \( G \), without the need for any modification to the proof. Furthermore, this bound is tight for the full range of \( p \). For example, if \( G \) is the balanced Tur\'{a}n graph \( T_{rn, r} \), then \( \chi(G) = \chi_f(G) = \chi_q(G) = \xi(G) = \xi_f(G) = r \), \( \mathcal{E}_p^+(G) = (r - 1)^p n^p \), and \( \mathcal{E}_p^-(G) = (r - 1) n^p \). Hence, Theorem~\ref{thm3} provides an optimal answer to Question~\ref{ques1}.
\end{remark}


In~\cite[Conjecture~6]{Elphick2017} and~\cite[Section~6]{Wocjan2019}, Elphick and Wocjan conjectured that Theorem~\ref{prethm2} also holds for both \( \chi_f(G) \) and \( \xi_f(G) \). These conjectures remained open until the present work. As a consequence of our main result, we resolve both conjectures by taking \( p = 0 \) in Theorem~\ref{thm3}.
\begin{cor}
    For any non-empty simple graph $G$, we have
    $$\chi_f(G) \geq \xi_f(G) \geq 1 + \max\left\{\frac{n^+}{n^-}, \frac{n^-}{n^+}\right\}.$$
\end{cor}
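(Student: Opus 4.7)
The chain $\chi(G) \geq \{\chi_f(G), \chi_q(G), \xi(G)\} \geq \xi_f(G)$ is already recorded in \eqref{eq:fiverelationships}, so the entire content of the theorem is the final inequality $\xi_f(G) \geq 1 + \max\{\mathcal{E}_p^+/\mathcal{E}_p^-,\, \mathcal{E}_p^-/\mathcal{E}_p^+\}^{1/|p-1|}$. I would fix any $d/r$-representation $\{P_v\}_{v \in V}$ of $G$ and aim to prove that the ratio $d/r$ dominates the right-hand side. The central construction is the completely positive map $\Phi \colon M_n(\mathbb{C}) \to M_d(\mathbb{C})$ given by $\Phi(X) = V^*(X \otimes I_d) V$, with $V := \sum_v e_v \otimes P_v \in \mathbb{C}^{nd \times d}$; equivalently, $\Phi(X) = \sum_{u,v} X_{uv}\, P_u P_v$. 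Because $P_v^2 = P_v$ and $P_u P_v = O$ for $uv \in E$, one immediately obtains $\Phi(A_G) = 0$ and $\Phi(I_n) = Q := \sum_v P_v \succeq 0$ with $\operatorname{tr}(Q) = nr$.

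Writing $A_G = \sum_i \lambda_i v_i v_i^{\top}$ and $M_i := \sum_v (v_i)_v P_v$, one has $\Phi(v_i v_j^{\top}) = M_i M_j$, so $\Phi(A_G) = 0$ becomes the matrix-valued identity $\sum_i \lambda_i M_i^2 = 0$, while $\Phi(I_n) = Q$ becomes $\sum_i M_i^2 = Q$. Tracing produces the two scalar constraints $\sum_i \lambda_i g_i = 0$ and $\sum_i g_i = nr$ with $g_i := \operatorname{tr}(M_i^2) \geq 0$. These identities—and, crucially, their matrix-valued refinements—constitute the linear-algebraic input.

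My strategy is two-tiered. First, I would establish the inertia case $p = 0$, i.e., $\xi_f(G) \geq 1 + \max(n^+(B)/n^-(B),\, n^-(B)/n^+(B))$ for \emph{every} Hermitian weighted adjacency matrix $B$ of $G$; this is precisely the conjecture of Elphick and Wocjan from \cite{Elphick2017, Wocjan2019}. I would derive it by exploiting not only the trace identities but the full matrix equation $\sum_i \lambda_i(B) M_i^2 = 0$, using that $\operatorname{im}(V)$ is an isotropic subspace for the indefinite Hermitian form $B \otimes I_d$ on $\mathbb{C}^{nd}$, and coupling this with a refined signature count that reflects the operator structure of $\{M_i\}$ rather than their scalar traces alone. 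Second, for $p \neq 1$, I would apply the inertia bound to the shifted matrices $A_G - tI$ for $t \in \mathbb{R}$ (themselves Hermitian weighted adjacency matrices of $G$, by the remark following Theorem~\ref{thm3}) and integrate against a suitable measure in $t$, invoking the layer-cake identity $\lambda_+^p = p \int_0^\infty t^{p-1}\mathbf{1}[\lambda > t]\, dt$ to reassemble $\mathcal{E}_p^\pm(A_G)$ on the two sides and to produce the fractional exponent $1/|p-1|$.

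The principal obstacle is the inertia case itself. The existing proofs of the $p = 0$ bound for $\chi(G)$ rely on the block-zero structure of $A_G$ under a classical coloring, a structure unavailable for $d/r$-representations, where the projectors $\{P_v\}$ need not commute and lack any completeness relation $\sum_v P_v = I_d$. A second subtlety is the interpolation step: a naive pairing of the pointwise inequality $n^+(A_G - tI) \leq (\xi_f - 1)\, n^-(A_G - tI)$ against $t^{p-1}$ produces divergent integrals on $[0,\infty)$, so the measure-theoretic step must be organized so that $\mathcal{E}_p^+$ and $\mathcal{E}_p^-$ are compared through a jointly finite kernel—perhaps via the dual representation $x^p = p\int_0^\infty (x - t)_+^{p-1}\, dt$ or a Mellin-type device—which is exactly what yields the sharp exponent $1/|p-1|$ rather than a cruder interpolation.
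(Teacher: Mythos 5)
Your reduction of the corollary to the final inequality of Theorem~\ref{thm3} via \eqref{eq:fiverelationships} is fine, and your map $\Phi(X)=V^*(X\otimes I_d)V$ correctly encodes the constraints $\Phi(A_G)=O$ and $\Phi(I_n)=\sum_v P_v$. But there are two genuine gaps. First, the inertia case $p=0$ for $\xi_f$ is precisely the open conjecture that this corollary resolves, and you do not prove it: you defer the key step to ``a refined signature count that reflects the operator structure of $\{M_i\}$'' without saying what that count is or why non-commuting projectors with no completeness relation admit one. Note that your $\Phi(I_n)=\sum_v P_v$ is not a multiple of the identity, which is exactly where a naive signature argument stalls; the paper circumvents this by conjugating each $P_v$ with a Haar-random unitary $\Gamma$ (Lemma~\ref{lem:quantum-to-block}), so that $\mathbb{E}[Q]=(r/d)I$ and the compression $M\mapsto QMQ$ becomes a stochastic block decomposition amenable to the majorization inequalities of Proposition~\ref{lem:fractional-decomposition}. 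The paper then proves the $0<p<1$ case directly and obtains $p=0$ as the limit $p\to 0^+$ --- the opposite direction from your plan of bootstrapping everything from $p=0$.

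Second, the interpolation step is broken, not merely delicate. The matrices $A_G-tI$ for $t\neq 0$ are not Hermitian weighted adjacency matrices of $G$: the argument needs $Q(W\otimes I_d)Q=O$, i.e.\ $W_{uv}\,P_vP_u=O$ for all $u,v$, and the diagonal terms force $W_{vv}=0$ since $P_vP_v=P_v\neq O$. Indeed the inertia bound is false for shifts: for $G=K_2$ and $t>1$ the matrix $A_G-tI$ is negative definite, so $n^-/n^+=\infty$ while $\xi_f(K_2)=2$. Only the one-sided inequality $n^+(A_G-tI)\leq(\xi_f(G)-1)\,n^-(A_G-tI)$ survives for $t\geq 0$ (and it follows trivially from the $t=0$ case by monotonicity of the two counts in $t$), and integrating it against $t^{p-1}$ diverges because $n^-(A_G-tI)\geq n^-(A_G)\geq 1$ for all $t>0$ --- the divergence you flag but do not repair. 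Moreover, the layer-cake identity expresses $\mathcal{E}_p^-(G)$ through $n^-(A_G+tI)$, a different shift from the one appearing in $\mathcal{E}_p^+(G)$, so no single application of an inertia bound produces a jointly finite kernel, and the exponent $1/|p-1|$ does not emerge. The paper instead runs the Ando--Lin decomposition $B=(1+\epsilon)A_G^+$, $C=\epsilon A_G^++A_G^-$ through Proposition~\ref{lem:fractional-decomposition} for each fixed $p$ separately, which is where that exponent actually comes from.
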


We employ a unified proof technique that applies to the full range of \( p \geq 0 \). Our starting point is the matrix decomposition approach of Ando and Lin~\cite{Ando2015}, which expresses the adjacency matrix \( A_G \) as the difference of two positive semi-definite matrices whose entries agree at position \( (i, j) \) whenever \( ij \) is not an edge of \( G \). However, the remainder of their argument crucially depends on the identity that, for any symmetric matrix \( A \), the square energy \( \sum_{i = 1}^n \lambda_i(A)^2 \) equals the sum of the squares of the entries of \( A \). This identity no longer holds for non-integral values of \( p \). To overcome this limitation, we instead make use of tools from majorization theory to establish a family of key \( p \)-norm inequalities between certain matrices (see Proposition~\ref{lem:fractional-decomposition}). These inequalities allow us to generalize the Ando--Lin method to the full range \( p \geq 0 \).

The paper is organized as follows. In Section~\ref{sec2}, we introduce the necessary tools from linear algebra and measure theory, along with a family of key inequalities concerning the \( p \)-norm of matrices. Section~\ref{sec3} presents the proof of Theorem~\ref{thm3}. As an application of our result, Section~\ref{sec4} determines the exact value of \( \chi_q \) for the Tilley graph, for which previous \( p \)-energy bounds fail to apply. For various range of \( p \), we also present examples of graphs for which that specific value of \( p \) yields the optimal lower bound. In Section~\ref{secproperity}, we explore several properties of \( p \)-energies in the range \( 0 < p < 1 \). Finally, Section~\ref{secconclusion} presents some concluding remarks.

\section{Preliminaries}\label{sec2}
\subsection{Tools from linear algebra and measure theory}
To apply the technique of Ando--Lin, given a \( d/r \)-representation, we aim to construct a suitable analogue of a block decomposition. We now introduce a generalized notion of block decomposition.

\begin{defn}\label{def:continuousfractionalblockdecomposition}
For each \( n \in \mathbb{N} \), let \( \mathcal{P}_n \) denote the set of orthogonal projection matrices in \( \mathbb{C}^{n \times n} \). A \emph{stochastic block decomposition of size \( \alpha \)} is a matrix-valued random variable \( Q \colon \Omega \to \mathcal{P}_n \) for some \( n \), defined on a probability space \( (\Omega, \mathcal{F}, \mathbb{P}) \), such that
\[
\mathbb{E}\left[Q\right] = \alpha^{-1} I,
\]
where \( \mathbb{E}\left[Q\right] := \int_\Omega Q(\omega) \, d\mathbb{P}(\omega) \) denotes the entrywise expected value.
\end{defn}

To see how this definition relates to the classical notion of block decomposition, consider a decomposition of a square matrix \( M \) partitioned into \( r \times r \) blocks, \( M = (M_{ij})_{i,j=1}^r \). Let \( Q_i \) denote the orthogonal projector onto the indices corresponding to the $i$th diagonal block, and let \( Q \) be a random variable that takes values in \( \{ Q_1, \dots, Q_r \} \) with equal probability. Then
\[
\mathbb{E}\left[Q\right] = \frac{1}{r} (Q_1 + \cdots + Q_r) = \frac{1}{r} I,
\]
so \( Q \) is a stochastic block decomposition of size \( r \) in the sense of Definition~\ref{def:continuousfractionalblockdecomposition}. In an earlier version of this paper~\cite{Elphick2025}, we introduced the notion of \emph{fractional block decomposition}, which can also be seen as a special case of the stochastic block decomposition.

Before explaining how to generate a stochastic block decomposition from a \( d/r \)-representation, we first introduce the Haar measure on the unitary group \( \mathrm{U}(d) \). A detailed introduction to Haar measure tools in quantum information can be found in~\cite{Mele2024}.

\begin{defn}
The \emph{unitary group} \( \mathrm{U}(d) \) is defined as the set of all \( d \times d \) complex matrices \( U \in \mathbb{C}^{d \times d} \) such that
\[
U^* U = I_d.
\]
That is, \( \mathrm{U}(d) \) consists of all unitary matrices in \( \mathbb{C}^{d \times d} \).
\end{defn}

\begin{defn}The \emph{Haar measure on the unitary group} $\mathrm{U}(d)$ is the unique probability measure $\mu_H$ that is both left and right invariant over the group $\mathrm{U}(d)$, i.e., for all integrable functions $f$ and for all $V \in \mathrm{U}(d)$, we have:
\begin{align}
\int_{\mathrm{U}(d)} f\left(U\right)d\mu_H(U)=\int_{\mathrm{U}(d)} f\left(VU\right)d\mu_H(U)=\int_{\mathrm{U}(d)} f\left(UV\right)d\mu_H(U).
\end{align} Moreover, the Haar measure \( \mu_H \) is a probability measure, satisfying the properties
\[
\int_S 1 \, d\mu_H(U) \geq 0 \quad \text{for all measurable sets } S \subseteq \mathrm{U}(d), \quad \text{and} \quad \int_{\mathrm{U}(d)} 1 \, d\mu_H(U) = 1.
\] Hence, for any measurable function \( f \) on \( \mathrm{U}(d) \), we define its \emph{expected value} with respect to \( \mu_H \) as
\begin{align}
\underset{U\sim\mu_H}{\mathbb{E}}\left[ f(U) \right] := \int_{\mathrm{U}(d)} f(U) \, d\mu_H(U).
\end{align}
When \( f(U) \) is matrix-valued, the expectation is understood entrywise.
\end{defn}

Let \( \{e_v : v \in V(G)\} \) denote the standard basis of \( \mathbb{C}^n \), where \( n = |V(G)| \). Let the entries of the adjacency matrix \( A_G \) be denoted by \( a_{uv} \), where \( u, v \in V \) index the rows and columns, respectively.  
Then we have
\[
A_G = \sum_{v w \in E} a_{vw} \, e_v e_w^*,
\]
where \( a_{vw} = e_v^* A_G e_w \). We now describe how to generate a stochastic block decomposition from a \( d/r \)-representation.

\begin{lem}
\label{lem:quantum-to-block}
Let \( G = (V, E) \) be a graph on \( n \) vertices, and let \( \{P_v\}_{v \in V(G)} \) be a \( d/r \)-representation of \( G \). Let \( \Gamma \) be a random unitary matrix sampled from the Haar measure on \( \mathrm{U}(d) \), and define the random matrix
\[
Q = \sum_{v \in V(G)} e_v e_v^* \otimes \Gamma^* P_v \Gamma.
\]
Then \( Q \in \mathbb{C}^{n \times n} \otimes \mathbb{C}^{d \times d} \) is a stochastic block decomposition of size \( \frac{d}{r} \), and it satisfies \( Q (A_G \otimes I_d) Q = O \) almost surely (in fact, deterministically).
\end{lem}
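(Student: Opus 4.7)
My plan is to verify the three required properties of $Q$ directly from its definition, handling them in order of increasing depth.

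First I would check that every realization $Q(\omega)$ is an orthogonal projector on $\mathbb{C}^{nd}$. The family $\{e_v e_v^*\}_{v \in V}$ consists of mutually orthogonal rank-one projectors summing to $I_n$, while each $\Gamma^* P_v \Gamma$ is a projector as the unitary conjugate of $P_v$. Thus $Q$ is clearly self-adjoint, and
$$Q^2 = \sum_{v, w} (e_v e_v^* e_w e_w^*) \otimes (\Gamma^* P_v \Gamma \Gamma^* P_w \Gamma) = \sum_v e_v e_v^* \otimes \Gamma^* P_v^2 \Gamma = Q,$$
where I used $e_v^* e_w = \delta_{vw}$, $\Gamma \Gamma^* = I_d$, and $P_v^2 = P_v$. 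So $Q \in \mathcal{P}_{nd}$ pointwise in $\omega$.

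Next, for the deterministic orthogonality relation $Q(A_G \otimes I_d)Q = O$, I would substitute $A_G = \sum_{u,v} a_{uv} e_u e_v^*$ and expand the triple product. After collapsing via $e_v^* e_u = \delta_{vu}$, the sum telescopes to
$$Q(A_G \otimes I_d)Q = \sum_{v, w} a_{vw}\, e_v e_w^* \otimes \Gamma^* P_v P_w \Gamma.$$
Since $a_{vw}$ is nonzero only when $vw \in E$, and the $d/r$-representation property forces $P_v P_w = O$ on every edge, every surviving summand vanishes. Note that this step never invokes the distribution of $\Gamma$; it holds for each realization.

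The remaining—and main—step is to compute $\mathbb{E}[Q]$ and identify it with $(d/r)^{-1} I_{nd} = (r/d) I_{nd}$. By linearity it suffices to compute $\mathbb{E}[\Gamma^* P_v \Gamma]$ for each $v$. Here I would invoke the standard Haar-measure identity
$$\mathbb{E}_{\Gamma \sim \mu_H}[\Gamma^* M \Gamma] = \frac{\tr M}{d}\, I_d \quad \text{for any } M \in \mathbb{C}^{d \times d}.$$
The justification, which is the one mildly nontrivial ingredient of the whole argument, is that right-invariance of $\mu_H$ under $\Gamma \mapsto \Gamma U$ forces $\mathbb{E}[\Gamma^* M \Gamma]$ to equal $U^* \mathbb{E}[\Gamma^* M \Gamma] U$ for every unitary $U$, hence by Schur's lemma it is a scalar multiple of $I_d$; taking traces and using linearity of $\tr$ together with cyclicity pins the scalar to $\tr M / d$. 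Applying this identity with $M = P_v$ (for which $\tr P_v = r$) gives $\mathbb{E}[\Gamma^* P_v \Gamma] = (r/d) I_d$, so
$$\mathbb{E}[Q] = \sum_{v \in V(G)} e_v e_v^* \otimes \frac{r}{d} I_d = \frac{r}{d}\, I_{nd},$$
which is exactly the condition for $Q$ to be a stochastic block decomposition of size $d/r$. Apart from this Schur/Haar argument, every step is algebraic bookkeeping.
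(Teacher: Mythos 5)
Your proposal is correct and follows essentially the same route as the paper: pointwise verification that $Q$ is a projector, expansion of $Q(A_G\otimes I_d)Q$ using $P_vP_w=O$ on edges, and the Haar-invariance/Schur's-lemma argument giving $\mathbb{E}[\Gamma^*P_v\Gamma]=\frac{r}{d}I_d$. The only difference is that you spell out the Kronecker-product bookkeeping more explicitly than the paper does.
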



\begin{proof}
Since each \( \Gamma^* P_v \Gamma \) is an orthogonal projector, it follows that \( Q^2 = Q = Q^* \), so \( Q \) is itself an orthogonal projector. Let \( \Phi(X) \) denote the expected value of \( \Gamma^* X \Gamma \) under the Haar measure on the unitary group \( \mathrm{U}(d) \):
\[
\Phi(X) = \underset{\Gamma\sim\mu_H}{\mathbb{E}}\left[\Gamma^* X \Gamma \right] = \int_{\mathrm{U}(d)} \Gamma^* X \Gamma \, d\mu_H(\Gamma),
\]
where \( d\mu_H(\Gamma) \) denotes the Haar measure on \( \mathrm{U}(d) \). We now compute the explicit form of $\Phi(X)$. The result, stated in Equation~\eqref{eq:firstmoment}, also appears in~\cite[Corollary~13]{Mele2024}. The map \( \Phi(X) \) is invariant under conjugation by any unitary matrix \( U \in \mathrm{U}(d) \), since
\[
U^* \Phi(X) U = \int_{\mathrm{U}(d)} (\Gamma U)^* X \Gamma U \, d\mu_H(\Gamma) = \int_{\mathrm{U}(d)} \Gamma^* X \Gamma \, d\mu_H(\Gamma) = \Phi(X),
\]
where the second equality follows from the right invariance of the Haar measure. Hence, \( \Phi(X) \) commutes with every unitary matrix \( U \in \mathrm{U}(d) \). By Schur's lemma (see~\cite[Theorem~1.2]{Berndt2007}), this implies that \( \Phi(X) \) must be a scalar multiple of the identity matrix:
\[
\Phi(X) = c I_d
\quad \text{for some scalar } c \in \mathbb{C}.
\]
Taking traces on both sides yields
\[
\mathrm{tr}(\Phi(X)) = \mathrm{tr}(X) = c \cdot \mathrm{tr}(I_d) = c d,
\]
so \( c = \frac{\mathrm{tr}(X)}{d} \), and thus
\begin{equation}\label{eq:firstmoment}
\Phi(X) = \frac{\mathrm{tr}(X)}{d} I_d.
\end{equation} Applying \eqref{eq:firstmoment} to \( X = P_v \), a rank-\( r \) orthogonal projector, we obtain
\[
\underset{\Gamma\sim\mu_H}{\mathbb{E}}\left[\Gamma^* P_v \Gamma \right] = \frac{r}{d} I_d.
\]
Therefore, we compute that
\[
\underset{\Gamma\sim\mu_H}{\mathbb{E}}\left[Q\right] = \sum_{v \in V} e_v e_v^* \otimes \underset{\Gamma\sim\mu_H}{\mathbb{E}}\left[\Gamma^* P_v \Gamma \right] = \frac{r}{d} \sum_{v \in V} e_v e_v^* \otimes I_d = \frac{r}{d} I_{nd}.
\] This confirms that \( Q \) is a stochastic block decomposition of size \( \frac{d}{r} \).

Finally, for each edge \( uv \in E(G) \), since \( P_u P_v = O \) in a \( d/r \)-representation, it follows that
\[
\Gamma^* P_v \Gamma \, \Gamma^* P_u \Gamma = \Gamma^* P_v P_u \Gamma = O,
\]
and hence
\[
Q (A_G \otimes I_d) Q = \sum_{uv \in E(G)} a_{uv} e_u e_v^* \otimes \Gamma^* P_v \Gamma \, \Gamma^* P_u \Gamma = O.
\]
This completes the proof.
\end{proof}

In addition, we introduce several notions and results from majorization theory. For a more detailed background, we refer the reader to~\cite[Part~I]{MOA11}.

\begin{defn}
We say that a vector \( \mathbf{x} \in \mathbb{R}^n \) is \emph{majorized} by a vector \( \mathbf{y} \in \mathbb{R}^n \) (i.e., \( \mathbf{y} \) \emph{majorizes} \( \mathbf{x} \)), denoted by \( \mathbf{x} \prec \mathbf{y} \), if the rearrangement of the components of \( \mathbf{x} \) and \( \mathbf{y} \) such that \(x_1 \geq x_2 \geq \cdots \geq x_n\), \(y_1 \geq y_2 \geq \cdots \geq y_n\) satisfies
\[
\sum_{i=1}^k x_i \leq \sum_{i=1}^k y_i \quad (1 \leq k \leq n - 1), \quad \text{and} \quad \sum_{i=1}^n x_i = \sum_{i=1}^n y_i.
\]
\end{defn}
\begin{remark}
For a set \( \Omega \subset \mathbb{R}^n \),  
we write \( x \prec y \) on \( \Omega \) to mean that \( x, y \in \Omega \) and \( x \prec y \).
\end{remark}

To derive inequalities from the majorization relation, the following concept will be useful.

\begin{defn}
A real-valued function \( f \) defined on a set \( \Omega \subseteq \mathbb{R}^n \) is said to be \emph{Schur-convex} on $\Omega$ if \[
x \prec y \quad \text{on } \Omega \quad \Rightarrow \quad \phi(x) \leq \phi(y).
\] Similarly, \( f \) is said to be \emph{Schur-concave} on $\Omega$ if \[
x \prec y \quad \text{on } \Omega \quad \Rightarrow \quad \phi(x) \geq \phi(y).
\]
\end{defn}
We now present two examples that will be used later. Their proofs can be found in~\cite[Section~3, C.1. Proposition]{MOA11}, and also follow from Karamata's inequality.
\begin{prop}
\label{prop:schur-concave}
Let \( F(x_1, x_2, \dots, x_n) = \sum_{i=1}^n x_i^p \) be defined on \( \mathbb{R}_+^n := \{ (x_1, \dots, x_n) \in \mathbb{R}^n : x_i \geq 0 \text{ for all } i \} \).
\begin{itemize}
    \item If \( 0 < p < 1 \), then \( F \) is Schur-concave on \( \mathbb{R}_+^n \).
    \item If \( p > 1 \), then \( F \) is Schur-convex on \( \mathbb{R}_+^n \).
\end{itemize}
\end{prop}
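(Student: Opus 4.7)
The plan is to reduce both claims to a single-variable statement about $t \mapsto t^p$ on $[0,\infty)$ and then invoke Karamata's inequality. The point is that $F$ is symmetric and separable, $F(x_1,\ldots,x_n) = \sum_{i=1}^n f(x_i)$ with $f(t)=t^p$, so Schur-convexity/concavity of $F$ is controlled by convexity/concavity of $f$.

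First I would verify the one-variable behavior: from $f''(t) = p(p-1)t^{p-2}$ on $(0,\infty)$, together with continuity at $t=0$, the map $f$ is convex on $[0,\infty)$ when $p \geq 1$ and concave on $[0,\infty)$ when $0 < p \leq 1$. Second, I would invoke Karamata's inequality: for any convex $\phi$ on an interval $I$ and any $a,b \in I^n$ with $a \prec b$, one has $\sum \phi(a_i) \leq \sum \phi(b_i)$. For $p > 1$ this is applied to $\phi = f$ directly, yielding $F(x) \leq F(y)$ whenever $x \prec y$ on $\mathbb{R}_+^n$, i.e., $F$ is Schur-convex. For $0 < p < 1$ it is applied to $\phi = -f$ (which is convex), yielding $F(x) \geq F(y)$ whenever $x \prec y$, i.e., $F$ is Schur-concave.

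An equivalent route is the Schur--Ostrowski criterion, which would require me to check the sign of $(x_i - x_j)(\partial_i F - \partial_j F) = p(x_i - x_j)(x_i^{p-1} - x_j^{p-1})$. This sign is governed by the monotonicity of $t \mapsto t^{p-1}$ on $(0,\infty)$: increasing when $p \geq 1$ and decreasing when $0 < p \leq 1$. Either criterion delivers the same dichotomy, and the $p=1$ case is trivial since $F$ is then a symmetric linear function, which is simultaneously Schur-convex and Schur-concave.

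I do not anticipate any substantive obstacle; the closest thing to a technical issue is handling boundary points where some $x_i = 0$ in the case $0 < p < 1$, since partial derivatives of $t^p$ blow up at the origin. The Schur--Ostrowski route would need a continuity argument to extend from the open positive orthant to $\mathbb{R}_+^n$, whereas the Karamata argument handles the endpoint automatically because it only uses convexity on the closed interval, not differentiability. For that reason I would present the Karamata-based proof.
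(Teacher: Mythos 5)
Your argument is correct: the paper gives no proof of its own, merely citing \cite[Section~3, C.1. Proposition]{MOA11} and noting that the result ``also follow[s] from Karamata's inequality,'' which is exactly the route you take. Your reduction to convexity/concavity of $t\mapsto t^p$ plus Karamata, including the observation that this avoids the differentiability issue at the boundary that would afflict the Schur--Ostrowski criterion, is a complete and correct proof along the paper's intended lines.
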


\subsection{$ p $-Norm inequalities for stochastic block decompositions}For a Hermitian positive semi-definite matrix \( X \in \mathbb{C}^{n \times n} \), let \( \lambda(X) = \{ \lambda_j(X) \}_{j=1}^n \) denote the sequence of eigenvalues of \( X \), arranged in non-increasing order. We set $\lambda_j(X) = 0$ for $j > n$. We are now ready to prove the following proposition, which replaces the entry-based analysis in Ando--Lin's method.
\begin{prop}
\label{lem:fractional-decomposition}
Let \( M \in \mathbb{C}^{n \times n} \) be a positive semi-definite matrix, and let \( Q \) be a stochastic block decomposition of size $\alpha$ in $\mathbb{C}^{n \times n} $. Define the sequence \( \beta(M) = (\beta_1, \dots, \beta_n) \) by
\[
\beta_j =\alpha \mathbb{E}\left[\lambda_j(QMQ) \right], \quad \text{for each } j = 1, \dots, n.
\]
Then:

\begin{itemize}
    \item[(1)] For any \( 0 < p < 1 \), we have
    \[
    \alpha \mathbb{E} \left[\|QMQ\|_p^p\right] \geq \|M\|_p^p \geq  \sum_{j=1}^n \beta_j^p \geq \alpha^p \mathbb{E} \left[\|QMQ\|_p^p\right].
    \]
    
    \item[(2)] For any \( p > 1 \), we have
    \[
    \alpha \mathbb{E} \left[\|QMQ\|_p^p\right] \leq \|M\|_p^p \leq  \sum_{j=1}^n \beta_j^p \leq \alpha^p \mathbb{E} \left[\|QMQ\|_p^p\right].
    \]
\end{itemize}
\end{prop}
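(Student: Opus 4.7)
The plan is to split the four-term chain into three independent pairwise inequalities, each of which automatically produces both the $p>1$ and the $0<p<1$ directions because the only scalar nonlinearity involved is $x \mapsto x^p$. Specifically I would establish, in order: (i) the outer inequality between $\alpha \mathbb{E}[\|QMQ\|_p^p]$ and $\|M\|_p^p$; (ii) the middle inequality between $\|M\|_p^p$ and $\sum_j \beta_j^p$; and (iii) the innermost inequality between $\sum_j \beta_j^p$ and $\alpha^p \mathbb{E}[\|QMQ\|_p^p]$. Throughout, the only probabilistic information about $Q$ that is used is the identity $\mathbb{E}[Q] = \alpha^{-1} I$ from Definition~\ref{def:continuousfractionalblockdecomposition}.

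For (i), the naive operator-compression bound $\lambda_j(QMQ) \leq \lambda_j(M)$ only gives $\mathbb{E}[\|QMQ\|_p^p] \leq \|M\|_p^p$, which is weaker than claimed by a factor of $\alpha \geq 1$. To recover that factor I would invoke the Araki--Lieb--Thirring trace inequality: for positive semidefinite $A, B$ and $p \geq 1$, $\operatorname{tr}\bigl((B^{1/2} A B^{1/2})^p\bigr) \leq \operatorname{tr}(A^p B^p)$, with the inequality reversed for $0<p\leq 1$. Setting $A = Q$ (so that $A^p = Q$), $B = M$, and using the identity $\operatorname{tr}((QMQ)^p) = \operatorname{tr}((M^{1/2}QM^{1/2})^p)$ (since $XX^*$ and $X^*X$ share the nonzero spectrum with $X=QM^{1/2}$), this yields the pointwise bound $\|QMQ\|_p^p \leq \operatorname{tr}(Q M^p)$. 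Taking expectation and using $\mathbb{E}[Q] = \alpha^{-1} I$ then delivers $\alpha \mathbb{E}[\|QMQ\|_p^p] \leq \|M\|_p^p$; the $0 < p < 1$ case follows identically from the reversed Araki--Lieb--Thirring inequality.

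For (ii), I would first establish the majorization $\lambda(M) \prec \beta$ on $\mathbb{R}_+^n$, and then invoke Proposition~\ref{prop:schur-concave}. By Ky Fan's maximum principle, for any isometry $U_0 : \mathbb{C}^k \to \mathbb{C}^n$,
\[
\sum_{j=1}^k \lambda_j(QMQ) = \sum_{j=1}^k \lambda_j(M^{1/2} Q M^{1/2}) \geq \operatorname{tr}(U_0^* M^{1/2} Q M^{1/2} U_0).
\]
Taking expectation, applying $\mathbb{E}[Q] = \alpha^{-1} I$, and then supremizing over $U_0$ gives $\sum_{j=1}^k \beta_j \geq \sum_{j=1}^k \lambda_j(M)$ for every $k$. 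At $k = n$ both sides equal $\operatorname{tr}(M)$, so the required majorization $\lambda(M) \prec \beta$ holds. Since $F(x) = \sum_i x_i^p$ is Schur-convex for $p > 1$ and Schur-concave for $0 < p < 1$ by Proposition~\ref{prop:schur-concave}, the middle inequality follows in both regimes.

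Finally, (iii) is coordinatewise scalar Jensen applied to $x \mapsto x^p$: for each $j$, convexity when $p > 1$ gives $(\mathbb{E}[\lambda_j(QMQ)])^p \leq \mathbb{E}[\lambda_j(QMQ)^p]$ (reversed for $0<p<1$); summing over $j$ and multiplying by $\alpha^p$ closes the chain. I expect step (i) to be the main obstacle, because one must recognize that the naive pointwise comparison loses exactly the needed factor $\alpha$ and that Araki--Lieb--Thirring combined with the simplification $Q^p = Q$ for a projector is precisely the trace-level substitute for the Ando--Lin identity $\sum_i \lambda_i(A)^2 = \sum_{i,j} A_{ij}^2$, which the authors explicitly single out as the obstruction to extending the original method to non-integer $p$.
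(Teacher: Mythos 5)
Your proposal is correct and follows essentially the same three-step structure as the paper's proof: the majorization \( \lambda(M) \prec \beta(M) \) combined with Schur convexity/concavity (Proposition~\ref{prop:schur-concave}) for the middle inequality, coordinatewise scalar Jensen for the right-hand inequality, and a trace inequality of the form \( \operatorname{tr}\left((QMQ)^p\right) \lessgtr \operatorname{tr}\left(QM^pQ\right) \) followed by the identity \( \mathbb{E}[Q] = \alpha^{-1} I \) for the left-hand one. The only real differences are in how the two auxiliary facts are justified: where you invoke Araki--Lieb--Thirring (using \( Q^p = Q \)) and Ky Fan's maximum principle, the paper proves the same statements by elementary means, namely a double-eigenbasis Jensen argument for the trace inequality (Lemma~\ref{lem:LHS}) and a rank-\(k\) spectral truncation of \(M\) together with monotonicity of eigenvalues under the Loewner order for the partial sums in the majorization (Lemma~\ref{lem:RHS}).
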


To prove the middle and right-hand sides of both inequalities in Proposition~\ref{lem:fractional-decomposition}, we first present a lemma that establishes a majorization relation between \( \lambda(M) \) and \( \beta(M) \).

\begin{lem}\label{lem:RHS}
    Under the same assumptions of Proposition~\ref{lem:fractional-decomposition}, we have
    \[
    \lambda(M) \prec \beta(M),
    \]
    where \( \lambda(M) = (\lambda_1, \dots, \lambda_n) \) denotes the sequence of eigenvalues of \( M \) arranged in non-increasing order, and \( \beta(M) = (\beta_1, \dots, \beta_n) \) is defined as in Proposition~\ref{lem:fractional-decomposition}.
\end{lem}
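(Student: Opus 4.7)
The plan is to verify the two defining conditions of the majorization relation $\lambda(M) \prec \beta(M)$: equality of the total sums $\sum_i \lambda_i(M) = \sum_j \beta_j$, and the partial-sum inequalities $\sum_{i=1}^k \lambda_i(M) \leq \sum_{j=1}^k \beta_j$ for $1 \leq k \leq n-1$.

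For the total sum, since $Q$ is almost surely an orthogonal projector, $Q^2 = Q$, so $\operatorname{tr}(QMQ) = \operatorname{tr}(MQ^2) = \operatorname{tr}(MQ)$. Taking expectations and using $\mathbb{E}[Q] = \alpha^{-1} I$ yields $\mathbb{E}[\operatorname{tr}(QMQ)] = \operatorname{tr}(M\, \mathbb{E}[Q]) = \alpha^{-1} \operatorname{tr}(M)$. Hence $\sum_j \beta_j = \alpha\, \mathbb{E}[\operatorname{tr}(QMQ)] = \operatorname{tr}(M) = \sum_i \lambda_i(M)$.

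For the partial sums, I would combine two ingredients. First, Ky Fan's maximum principle: for any Hermitian $X \in \mathbb{C}^{n \times n}$ and any $k$,
\[
\sum_{j=1}^k \lambda_j(X) \;=\; \max_{V \in \mathbb{C}^{n \times k},\, V^*V = I_k} \operatorname{tr}(V^* X V).
\]
Second, putting $X = M^{1/2} Q$ gives $X^* X = Q M Q$ and $X X^* = M^{1/2} Q M^{1/2}$, so $QMQ$ and $M^{1/2} Q M^{1/2}$ share the same ordered sequence of non-negative eigenvalues. Apply Ky Fan to $M^{1/2} Q M^{1/2}$ with $V_0 = [v_1, \dots, v_k]$, the matrix whose columns are top-$k$ orthonormal eigenvectors of $M$. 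Since $M^{1/2} V_0 = V_0 \Lambda_k^{1/2}$ with $\Lambda_k = \operatorname{diag}(\lambda_1(M), \dots, \lambda_k(M))$,
\[
\sum_{j=1}^k \lambda_j(QMQ) \;\geq\; \operatorname{tr}(V_0^* M^{1/2} Q M^{1/2} V_0) \;=\; \operatorname{tr}\!\bigl(\Lambda_k^{1/2} V_0^* Q V_0 \Lambda_k^{1/2}\bigr).
\]
Taking expectations and using $V_0^* \mathbb{E}[Q] V_0 = \alpha^{-1} V_0^* V_0 = \alpha^{-1} I_k$ yields $\mathbb{E}\bigl[\sum_{j=1}^k \lambda_j(QMQ)\bigr] \geq \alpha^{-1} \sum_{i=1}^k \lambda_i(M)$, and multiplying by $\alpha$ gives the required $\sum_{j=1}^k \beta_j \geq \sum_{i=1}^k \lambda_i(M)$.

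The main obstacle is the quadratic dependence on $Q$ in the expression $QMQ$: applying Ky Fan directly with $V_0$ would produce the awkward term $\mathbb{E}[\operatorname{tr}(Q V_0 V_0^* Q M)]$, and $\mathbb{E}[Q V_0 V_0^* Q]$ cannot be controlled using only the first-moment hypothesis $\mathbb{E}[Q] = \alpha^{-1} I$. The cyclic-eigenvalue identity above, valid because $M$ is PSD, transfers the problem to $M^{1/2} Q M^{1/2}$ which is linear in $Q$, at which point the hypothesis on $\mathbb{E}[Q]$ can be applied directly.
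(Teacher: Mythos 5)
Your proof is correct. It follows the same overall strategy as the paper's proof --- verify the trace identity via $\operatorname{tr}(QMQ)=\operatorname{tr}(QM)$ and $\mathbb{E}[Q]=\alpha^{-1}I$, then obtain the partial-sum inequalities by testing against the top-$k$ eigenspace of $M$ --- but the key lemma you invoke for the partial sums is different. The paper forms the rank-$k$ truncation $N=\sum_{i=1}^k\lambda_i \mathbf{v}_i\mathbf{v}_i^*$, notes $\operatorname{tr}(QNQ)=\sum_{j=1}^k\lambda_j(QNQ)$ because $QNQ$ has rank at most $k$, and then uses $QMQ\geq QNQ$ together with Weyl's monotonicity principle to compare with $\sum_{j=1}^k\lambda_j(QMQ)$; the quantity it ultimately computes, $\alpha\,\mathbb{E}[\operatorname{tr}(NQ)]=\alpha\,\mathbb{E}[\operatorname{tr}(\Lambda_k V_0^*QV_0)]$, is exactly the one appearing in your argument. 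You instead reach the same comparison via Ky Fan's maximum principle applied to $M^{1/2}QM^{1/2}$, using the identity $\lambda(QMQ)=\lambda(M^{1/2}QM^{1/2})$ (from $X^*X$ versus $XX^*$ with $X=M^{1/2}Q$) to linearize the dependence on $Q$ --- the paper linearizes instead through the cyclicity $\operatorname{tr}(QNQ)=\operatorname{tr}(NQ)$. Both routes rest on standard facts of comparable depth; your variational version avoids introducing the auxiliary matrix $N$ and the rank argument, at the cost of invoking the $M^{1/2}$ conjugation trick, and your closing remark correctly identifies why the naive application of Ky Fan to $QMQ$ would fail with only first-moment information on $Q$.
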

\begin{proof}
    We first check that $\lambda(M)$ and $\beta(M)$ have the same sum. Note that
    $$\tr(Q M Q) = \tr(Q^2 M) = \tr(QM),$$
    hence
    $$\sum_{j=1}^n \beta_j = \alpha \sum_{j = 1}^n \mathbb{E} \left[ \lambda_j(QMQ)\right] = \alpha  \mathbb{E} \left[ \sum_{j = 1}^n \lambda_j(QM)\right] = \alpha  \mathbb{E} \left[ \tr(QM)\right]  = \alpha \tr \left( \mathbb{E} \left[Q\right]M\right) =\tr(M),$$ where the last equality uses the identity \( \mathbb{E}\left[Q\right] = \alpha^{-1} I \) from Definition~\ref{def:continuousfractionalblockdecomposition}.
    
    We next check that for any $ 1 \leq k \leq n-1$, we have
    $$\lambda_1 + \cdots + \lambda_k \leq \beta_1 + \cdots + \beta_k.$$
    To prove this, let \( M = \sum_{i = 1}^n \lambda_i \bv_i \bv_i^* \) be the spectral decomposition of \( M \), where the eigenvalues satisfy \( \lambda_1 \geq \lambda_2 \geq \cdots \geq \lambda_n \). Define \( N = \sum_{i = 1}^k \lambda_i \bv_i \bv_i^* \). Then we have
    $$\lambda_1 + \cdots + \lambda_k = \tr(N)= \alpha \tr \left( \mathbb{E}\left[Q\right]N\right) = \alpha  \mathbb{E} \left[ \tr(QN)\right] = \alpha  \mathbb{E} \left[ \tr(QNQ)\right].$$ As $N$ has rank at most $k$, it follows that $QNQ$ has rank at most $k$, and hence $\tr(QNQ) = \sum_{j = 1}^k \lambda_j(QNQ).$ Since \( M \) is positive semi-definite, we have \( M \geq N \geq O \). Thus,  we know that \( Q M Q - Q N Q = Q (M - N) Q \geq O.\) It follows that \( QMQ \geq QNQ \). By \cite[Corollary~4.3.12]{Horn13}, we know that\[
\lambda_j(QMQ) \geq \lambda_j(QNQ) \quad \text{for all } j.
\] Now we conclude that $$\lambda_1 + \cdots + \lambda_k = \alpha  \mathbb{E} \left[ \sum_{j = 1}^k \lambda_j(QNQ)\right] \leq \alpha  \mathbb{E} \left[ \sum_{j = 1}^k \lambda_j(QMQ)\right] = \sum_{j = 1}^k \alpha  \mathbb{E} \left[ \lambda_j(QMQ)\right]=\sum_{j = 1}^k \beta_j .$$
    Therefore, \( \lambda(M) \prec \beta(M) \).
\end{proof}

To prove the left-hand side of both inequalities in Proposition~\ref{lem:fractional-decomposition}, we establish the following lemma.
\begin{lem}\label{lem:LHS}
Let \( M \in \mathbb{C}^{n \times n}\) be a positive semi-definite matrix, and let \( Q \in \mathbb{C}^{n \times n} \) be an orthogonal projector. Then:
\begin{itemize}
    \item[(1)] For any real number \( 0 < p < 1 \), we have
    \[
    \operatorname{tr}\left((QMQ)^p \right) \geq \operatorname{tr}\left(QM^pQ\right).
    \]

     \item[(2)] For any real number \( p > 1 \), we have
    \[
    \operatorname{tr}\left((QMQ)^p\right) \leq \operatorname{tr}\left(QM^pQ\right).
    \]
\end{itemize}
\end{lem}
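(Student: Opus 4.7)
The plan is to reduce the inequality to a scalar Jensen inequality by factoring $Q$ through an isometry. Let $r = \operatorname{rank}(Q)$, and choose an $n \times r$ matrix $V$ whose columns form an orthonormal basis of the range of $Q$, so that $V^*V = I_r$ and $VV^* = Q$. Set $A := V^*MV \in \mathbb{C}^{r \times r}$, which is positive semi-definite because $M$ is.

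First I would record two identifications that eliminate $Q$ in favor of $V$. Using $Q^2 = Q$, the right-hand side becomes $\operatorname{tr}(QM^pQ) = \operatorname{tr}(VV^*M^p) = \operatorname{tr}(V^*M^pV)$. For the left-hand side, note that $QMQ = V A V^*$ and $(VAV^*)^k = V A^k V^*$ for every positive integer $k$; extending by functional calculus on positive operators, the non-zero eigenvalues of $(QMQ)^p$ coincide with those of $A^p$, while the remaining $n-r$ zero eigenvalues of $QMQ$ contribute $0^p = 0$. Hence $\operatorname{tr}((QMQ)^p) = \operatorname{tr}(A^p)$, and the lemma reduces to
\[
\operatorname{tr}(A^p) \geq \operatorname{tr}(V^*M^pV) \ \ (0 < p < 1), \qquad \operatorname{tr}(A^p) \leq \operatorname{tr}(V^*M^pV) \ \ (p > 1).
\]

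The heart of the argument is the following one-shot trace Jensen inequality: for any convex $f : [0,\infty) \to \mathbb{R}$,
\[
\operatorname{tr} f(V^*MV) \leq \operatorname{tr} V^* f(M) V,
\]
with the inequality reversed when $f$ is concave. To prove this, I take the spectral decompositions $M = \sum_{i=1}^n \lambda_i w_i w_i^*$ and $A = \sum_{j=1}^r \mu_j u_j u_j^*$ and introduce the weights $p_{ij} := |\langle w_i, V u_j\rangle|^2$. A direct expansion gives $\mu_j = u_j^* V^* M V u_j = \sum_i \lambda_i p_{ij}$, while $\sum_i p_{ij} = \|V u_j\|^2 = 1$ because $V$ is an isometry. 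Thus each $\mu_j$ is a genuine convex combination of the $\lambda_i$, so scalar Jensen yields $f(\mu_j) \leq \sum_i p_{ij} f(\lambda_i)$ for convex $f$. Summing over $j$ and using $\sum_j p_{ij} = w_i^* V V^* w_i = \|V^* w_i\|^2$ together with the spectral representation of $f(M)$ gives the desired trace inequality; the concave case is identical with the inequality flipped.

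Applying this with $f(x) = x^p$, which is concave on $[0,\infty)$ for $0 < p < 1$ and convex for $p > 1$, immediately delivers both parts of the lemma. I do not anticipate any genuine obstacle: once the factorization $Q = V V^*$ is in place, the problem collapses to the one-variable Jensen inequality, and no operator convexity or monotonicity is invoked (which is fortunate, since $x \mapsto x^p$ fails to be operator convex for $p > 2$). The only technical care is in handling the rank-deficient zero eigenvalues via $0^p = 0$, which is valid for all $p > 0$.
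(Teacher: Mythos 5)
Your proposal is correct and is essentially the paper's own argument: the paper likewise writes each nonzero eigenvalue $\mu_j$ of $QMQ$ as the convex combination $\sum_i \lambda_i |\langle \bv_i, \bw_j\rangle|^2$ of the eigenvalues of $M$ (your $Vu_j$ are exactly its $\bw_j$, and your weights $p_{ij}$ are its $|U_{ij}|^2$), then applies scalar Jensen to $t^p$ and sums. Your packaging via the isometry factorization $Q = VV^*$ is only a cosmetic difference.
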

\begin{proof}
Let \( (\bv_i)_{i = 1}^n \) be an orthonormal eigenbasis of \( M \) with corresponding eigenvalues \( \lambda_i \). Let \( (\bw_j)_{j = 1}^n \) be an orthonormal eigenbasis of \( QMQ \), with eigenvalues \( \mu_j \), such that the vectors \( \bw_1, \dots, \bw_k \) span the image of the projector \( Q \). Define
\[
U_{ij} = \langle \bv_i, \bw_j \rangle = \bv_i^* \bw_j
\]
to be the entries of the change-of-basis matrix from \( \left(\bv_i\right)_{i = 1}^n \) to \( \left(\bw_j\right)_{j = 1}^n \). Then for each \( j \in [k] \), we have
\[
\mu_j = \bw_j^* M \bw_j  = \bw_j^*\left( \sum_{i=1}^{n} \lambda_i \bv_i \bv_i^*  \right)\bw_j= \sum_{i = 1}^n\lambda_i \abs{U_{ij}}^2.
\]
Therefore,
\[
\operatorname{tr}\left((QMQ)^p\right) = \sum_{j = 1}^k \mu_j^p = \sum_{j = 1}^k \left( \sum_{i = 1}^n \lambda_i \abs{U_{ij}}^2 \right)^p.
\]
On the other hand,
\[
\operatorname{tr}\left(Q M^p Q\right) = \sum_{j = 1}^k \bw_j^* M^p \bw_j = \sum_{j = 1}^k \bw_j^* \left( \sum_{i=1}^{n} \lambda_i^p \bv_i \bv_i^*  \right) \bw_j
= \sum_{j = 1}^k \sum_{i = 1}^n \lambda_i^p \abs{U_{ij}}^2.
\] Now, by Parseval's identity, for each \( j \), we have
    $$\sum_{i = 1}^n\abs{U_{ij}}^2 = \sum_{i = 1}^n |\langle \bv_i, \bw_j \rangle|^2 = \norm{\bw_j}^2 =1.$$ For $0 < p < 1$, applying Jensen's inequality with the concave function \(t^p\), we obtain
\[
\left( \sum_{i = 1}^n \lambda_i \abs{U_{ij}}^2 \right)^p \geq \sum_{i = 1}^n \lambda_i^p \abs{U_{ij}}^2.
\]
Summing over \( j = 1, \dots, k \), this yields
\[
\operatorname{tr}((QMQ)^p) \geq \operatorname{tr}(Q M^p Q).
\] Similarly, for \( p > 1 \), the function \( t^p \) is convex, so Jensen's inequality gives
\[
\left( \sum_{i = 1}^n \lambda_i \abs{U_{ij}}^2 \right)^p \leq \sum_{i = 1}^n \lambda_i^p \abs{U_{ij}}^2,
\]
and thus
\[
\operatorname{tr}((QMQ)^p) \leq \operatorname{tr}(Q M^p Q).
\]This completes the proof.\end{proof}
\begin{remark}
In fact, for \( 0 < p < 1 \) and \( 1 < p \leq 2 \), one can establish a stronger result than Lemma~\ref{lem:LHS}. When \( 1 < p \leq 2 \), it is known that the function \( t^p \) is operator convex; see~\cite[Exercise~V.2.11]{Bhatia13}. Therefore, by~\cite[Theorem~V.2.3]{Bhatia13}, we have
\[
(QMQ)^p \leq QM^p Q.
\] On the other hand, for \( 0 < p < 1 \), the function \( t^p \) is operator concave; see~\cite[Theorem~V.1.9]{Bhatia13} and~\cite[Theorem~V.2.5]{Bhatia13}. Hence, by~\cite[Theorem~V.2.3]{Bhatia13}, we obtain
\[
(QMQ)^p \geq QM^p Q.
\] However, for \( p > 2 \), a similar inequality does not hold in general, as \( t^p \) is no longer operator convex in this range; see~\cite[Exercise~V.2.11]{Bhatia13}.
\end{remark}

We are now ready to complete the proof of Proposition~\ref{lem:fractional-decomposition}.
\begin{proof}[Proof of Proposition~\ref{lem:fractional-decomposition}]
Assume first that \( 0 < p < 1 \). The middle side inequalities follow from Lemma~\ref{lem:RHS} together with Proposition~\ref{prop:schur-concave}. It thus remains to prove the right-hand and left-hand side inequalities.

For the right-hand side, we apply the probabilistic form of Jensen's inequality and obtain
\[
\left( \mathbb{E} \left[X\right] \right)^p \geq \mathbb{E} \left[X^p \right],
\]which holds for every integrable, real-valued random variable \( X \) defined on a probability space. Thus, we know that
$$\sum_{j=1}^n \left( \mathbb{E} \left[ \lambda_j(QMQ) \right] \right)^p \geq \sum_{j=1}^n  \mathbb{E} \left[ \lambda_j^p(QMQ) \right],
$$ which means $\sum_{j=1}^n \beta_j^p \geq \alpha^p \mathbb{E} \left[\|QMQ\|_p^p\right]$.

For the left-hand side, by Lemma~\ref{lem:LHS}, we have
$$\norm{Q M Q}_p^p = \tr \left((Q M Q)^p\right) \geq \tr(Q M^p Q).$$ Taking the expected value and multiplying by $\alpha$ on both sides, we obtain
\[
\alpha \mathbb{E} \left[ \|QMQ\|_p^p \right] \geq \alpha \mathbb{E} \left[ \tr(Q M^p Q) \right] = \alpha \mathbb{E} \left[ \tr(Q M^p) \right] = \alpha \tr \left( \mathbb{E} \left[ Q\right] M\right)
= \operatorname{tr}(M^p) = \|M\|_p^p.
\] where the second-to-last equality uses the identity \( \mathbb{E}\left[Q\right] = \alpha^{-1} I \) from Definition~\ref{def:continuousfractionalblockdecomposition}.

The case \( p > 1 \) follows from a similar argument, using the reversed inequality in Lemma~\ref{lem:LHS}. This completes the proof.
\end{proof}

\section{Proof of the main result}\label{sec3}
In view of Inequality~\eqref{eq:fiverelationships}, to prove Theorem~\ref{thm3}, it suffices to establish the following lower bound for \( \xi_f(G) \).

\begin{thm}\label{thmxifmcase}
    Let \( p \geq 0 \). Then for any non-empty simple graph $G$, we have
    $$\xi_f(G) \geq 1 + \max\left\{\frac{\cE_p^+(G)}{\cE_p^-(G)}, \frac{\cE_p^-(G)}{\cE_p^+(G)}\right\}^{\frac{1}{\abs{p - 1}}}.$$
\end{thm}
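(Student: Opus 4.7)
The plan is to adapt the Ando--Lin strategy to the fractional/projective setting by combining Lemma~\ref{lem:quantum-to-block} with Proposition~\ref{lem:fractional-decomposition}, applied to a carefully chosen one-parameter family of positive semi-definite matrices whose interaction with the block decomposition is parameter-independent. Fix any $d/r$-representation $\{P_v\}_{v \in V(G)}$ of $G$ and set $\alpha = d/r$; by Definition~\ref{def:projectiverank}, it suffices to prove the claimed lower bound with $\alpha$ in place of $\xi_f(G)$. Lemma~\ref{lem:quantum-to-block} then produces a stochastic block decomposition $Q$ of size $\alpha$ satisfying $Q(A_G \otimes I_d)Q = O$. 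Decompose $A_G = P - N$ with $P = A_G^+$, $N = A_G^-$, and lift to $\tilde P = P \otimes I_d$, $\tilde N = N \otimes I_d$, so that $\|\tilde P\|_p^p = d\,\mathcal{E}_p^+(G)$ and $\|\tilde N\|_p^p = d\,\mathcal{E}_p^-(G)$. The identity $Q(A_G \otimes I_d)Q = O$ translates into the key equality $Q\tilde P Q = Q\tilde N Q$.

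A direct application of Proposition~\ref{lem:fractional-decomposition} to $\tilde P$ and $\tilde N$, combined with $Q\tilde P Q = Q\tilde N Q$, only yields the weaker bound $\alpha \geq (\mathcal{E}_p^+/\mathcal{E}_p^-)^{1/|p-1|}$, missing the crucial additive ``$+1$''. My key idea to recover it is to introduce the one-parameter family
\[
\tilde M_t = (1+t)\tilde P + (1-t)\tilde N, \qquad t \in [-1,1].
\]
Since $\tilde P, \tilde N \succeq 0$ have orthogonal ranges, $\tilde M_t \succeq 0$ for each $t$, with $\|\tilde M_t\|_p^p = d\bigl[(1+t)^p \mathcal{E}_p^+(G) + (1-t)^p \mathcal{E}_p^-(G)\bigr]$. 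Crucially, $Q\tilde M_t Q = 2Q\tilde P Q$ is \emph{independent} of $t$.

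For $p > 1$, pairing the right-hand Proposition~\ref{lem:fractional-decomposition} bound for $\tilde P$ (namely $\|\tilde P\|_p^p \leq \alpha^p\,\mathbb{E}\|Q\tilde P Q\|_p^p$) with the left-hand bound for $\tilde M_t$ (namely $\alpha\,\mathbb{E}\|Q\tilde M_t Q\|_p^p \leq \|\tilde M_t\|_p^p$), and eliminating the common factor $\mathbb{E}\|Q\tilde P Q\|_p^p$, I expect to obtain
\[
2^p\,\mathcal{E}_p^+(G) \leq \alpha^{p-1}\bigl[(1+t)^p \mathcal{E}_p^+(G) + (1-t)^p \mathcal{E}_p^-(G)\bigr] \quad\text{for every } t \in [-1,1].
\]
A calculus minimization of the right-hand side over $t$ gives the optimizer $t_* = (\rho-1)/(\rho+1)$ with $\rho = (\mathcal{E}_p^-(G)/\mathcal{E}_p^+(G))^{1/(p-1)}$; the identity $\rho^p\,\mathcal{E}_p^+(G) = \rho\,\mathcal{E}_p^-(G)$ collapses the minimum to $2^p(\rho+1)^{1-p}\mathcal{E}_p^-(G)$, and the resulting inequality rearranges to $\alpha \geq 1 + (\mathcal{E}_p^+(G)/\mathcal{E}_p^-(G))^{1/(p-1)}$. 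The range $0 < p < 1$ is handled by the same algebra using the reversed inequalities of Proposition~\ref{lem:fractional-decomposition} (now one maximizes the concave function of $t$), and the symmetric bound with $\mathcal{E}_p^-/\mathcal{E}_p^+$ follows by swapping $P \leftrightarrow N$. The edge case $p = 0$ follows by letting $p \to 0^+$ and using $\mathcal{E}_p^\pm \to n^\pm$. The crux of the argument is discovering the family $\tilde M_t$ whose $Q$-shadow is parameter-free; once that degree of freedom is identified, the optimization is routine.
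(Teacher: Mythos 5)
Your argument is correct and is essentially the paper's proof in a different parameterization: your pair $\bigl(\tilde P,\ \tilde M_t\bigr)$ with $\tilde M_t=(1+t)\tilde P+(1-t)\tilde N$ is, after rescaling by $1/(1-t)$ and setting $\epsilon=\tfrac{1+t}{1-t}$, exactly the paper's pair $B=(1+\epsilon)A_G^+$, $C=\epsilon A_G^+ + A_G^-$ (both compressions equal $(1+\epsilon)Q\tilde PQ$), and your calculus optimization over $t$ lands on the same choice the paper makes directly by setting $\epsilon=1/(\xi_f(G)-1)$. The only omission is the case $p=1$, where the exponent $1/\lvert p-1\rvert$ is undefined and the paper justifies the bound by showing the limit of the right-hand side as $p\to 1$ exists; you should add that limiting argument (your treatment of $p=0$ is fine).
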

\begin{proof}
Let \( A_G \in \mathbb{R}^{n \times n} \) denote the adjacency matrix of the graph \( G \), and let its eigenvalues be ordered as \[
\lambda_1 \geq \lambda_2 \geq \cdots \geq \lambda_n.
\] Let \( n^+ \) and \( n^- \) denote the numbers of positive and negative eigenvalues of \( A_G \), respectively. For each \( i = 1, \dots, n \), let \( v_i \) be an eigenvector corresponding to \( \lambda_i \), and assume that \( \{v_1, \dots, v_n\} \) forms an orthonormal basis of \( \mathbb{R}^n \). Then, by the spectral decomposition of \( A_G \), we have
\[
A_G = \sum_{i=1}^{n} \lambda_i v_i v_i^*.
\]
Define
\begin{equation}\label{eqpnpenergy}
A_G^+ = \sum_{i = 1}^{n^+} \lambda_i v_i v_i^*, \quad 
A_G^- = -\sum_{i = n-n^{-}+1}^{n} \lambda_i v_i v_i^*.
\end{equation}
Then both \( A_G^+ \) and \( A_G^- \) are positive semi-definite matrices, and they satisfy \( A_G^+ A_G^- = A_G^- A_G^+ = O \). 

Let \( \{P_v\}_{v \in V(G)} \) be a \( d/r \)-representation of \( G \), and sample \( \Gamma \) from the Haar measure on \( \mathrm{U}(d) \). Then, by Lemma~\ref{lem:quantum-to-block}, the random matrix
\[
Q = \sum_{v \in V} e_v e_v^* \otimes \Gamma^* P_v \Gamma
\]
is a stochastic block decomposition of size \( \frac{d}{r} \), and satisfies
\[
Q \left(A_G \otimes I_d \right) Q = O.
\] Let \( \epsilon > 0 \) be a parameter to be chosen later, and define
\[
B = (1 + \epsilon) A_G^+, \quad C = \epsilon A_G^+ + A_G^-.
\]
We also define the lifted matrices
\[
\widetilde{B} = B \otimes I_d = \left((1 + \epsilon) A_G^+ \right) \otimes I_d, \quad 
\widetilde{C} = C \otimes I_d = \left(\epsilon A_G^+ + A_G^- \right) \otimes I_d.
\]
Then we have the identity
\begin{equation}\label{eqeq:partitionidentity}
Q \widetilde{B} Q = Q \widetilde{C} Q.
\end{equation}since \( \widetilde{B} - \widetilde{C} = A_G \otimes I_d \) and \( Q \left(A_G \otimes I_d \right) Q = O\).

\subsection{Case \texorpdfstring{$0 < p < 1$}{0 < p < 1}}
By part~(1) of Proposition~\ref{lem:fractional-decomposition} and the identity~\eqref{eqeq:partitionidentity}, we obtain \begin{equation}\label{0<p<1quantumpenergy11}\|\widetilde{B}\|_p^p \geq \left( \frac{d}{r} \right)^{p} \mathbb{E} \left[\|Q\widetilde{B}Q\|_p^p\right] = \left( \frac{d}{r} \right)^{p} \mathbb{E} \left[\|Q\widetilde{C}Q\|_p^p\right] \geq \left( \frac{d}{r} \right)^{p-1} \|\widetilde{C}\|_p^p.\end{equation}
Moreover, since
\[
\|\widetilde{B}\|_p^p = \| B\|_p^p  \|I_d \|_p^p = d \cdot \| B \|_p^p,
\quad \text{and} \quad
\|\widetilde{C}\|_p^p = \| C\|_p^p  \|I_d \|_p^p = d \cdot \| C \|_p^p,
\]
we may cancel the common factor \( d \) in Equation~\eqref{0<p<1quantumpenergy11}, yielding
\begin{equation} \label{eq:xi_f_bound_step1}
\left( \frac{d}{r} \right)^{1 - p} \|B\|_p^p \geq  \|C\|_p^p.
\end{equation}
Taking the infimum over all possible pairs \( (d, r) \) in~\eqref{eq:xi_f_bound_step1}, we obtain
\begin{equation}\label{eqxi_f1}
\xi_f(G)^{1 - p} \|B\|_p^p \geq \|C\|_p^p.
\end{equation}We compute that\begin{equation}\label{eqxi_f2}
    \|B\|_p^p = (1 + \epsilon)^p \cE_p^+(G), \quad \|C\|_p^p = \epsilon^p \cE_p^+(G) + \cE_p^-(G).\end{equation}
    Thus, by \eqref{eqxi_f1} and \eqref{eqxi_f2}, we have
    $$\xi_f(G)^{1 - p} (1 + \epsilon)^p \cE_p^+(G) \geq \epsilon^p \cE_p^+(G) + \cE_p^-(G).$$
    Taking $\epsilon = \frac{1}{\xi_f(G) - 1}$, we obtain
    $$\xi_f(G)^{1 - p} \frac{\xi_f(G)^p}{(\xi_f(G) - 1)^p} \cE_p^+(G) \geq \frac{1}{(\xi_f(G) - 1)^p} \cE_p^+(G) + \cE_p^-(G).$$
    Hence, we know that
    $$(\xi_f(G) - 1)^{1 - p} \cE_p^+(G) \geq \cE_p^-(G),$$ which means $$\xi_f(G) \geq 1+\left(\frac{\cE_p^-(G)}{\cE_p^+(G)}\right)^{\frac{1}{1-p}}.$$ Similarly, reversing the roles of the positive and negative parts in the preceding argument yields\[
\xi_f(G) \geq 1 + \left( \frac{\mathcal{E}_p^+(G)}{\mathcal{E}_p^-(G)} \right)^{\frac{1}{1 - p}}.
\]Therefore, \begin{equation}\label{eeqqmajor6}\xi_f(G) \geq 1 + \max\left\{\frac{\cE_p^+(G)}{\cE_p^-(G)}, \frac{\cE_p^-(G)}{\cE_p^+(G)}\right\}^{\frac{1}{1-p}}\end{equation} holds for all $0<p<1$.

\subsection{Case \texorpdfstring{$ p > 1$}{ p > 1}}
By part~(2) of Proposition~\ref{lem:fractional-decomposition} and the identity~\eqref{eqeq:partitionidentity}, we obtain \begin{equation}\label{p>1quantumpenergy11}\|\widetilde{B}\|_p^p \leq \left( \frac{d}{r} \right)^{p} \mathbb{E} \left[\|Q\widetilde{B}Q\|_p^p\right] = \left( \frac{d}{r} \right)^{p} \mathbb{E} \left[\|Q\widetilde{C}Q\|_p^p\right] \leq \left( \frac{d}{r} \right)^{p-1} \|\widetilde{C}\|_p^p.\end{equation}
Moreover, since \( \|\widetilde{B}\|_p^p = d \|B\|_p^p \) and \( \|\widetilde{C}\|_p^p = d \|C\|_p^p \), we may cancel the common factor \( d \) in Inequality~\eqref{p>1quantumpenergy11}. Taking the infimum over all possible pairs \( (d, r) \), we obtain
\begin{equation}\label{eqxi_f3}
\|B\|_p^p \leq \xi_f(G)^{p-1} \|C\|_p^p.
\end{equation}Combining~\eqref{eqxi_f2} and~\eqref{eqxi_f3}, we obtain
\[
\xi_f(G)^{1 - p} (1 + \epsilon)^p \, \mathcal{E}_p^+(G) \leq \epsilon^p \, \mathcal{E}_p^+(G) + \mathcal{E}_p^-(G).
\]Setting \( \epsilon = \frac{1}{\xi_f(G) - 1} \) yields $$\xi_f(G)^{1 - p} \frac{\xi_f(G)^p}{(\xi_f(G) - 1)^p} \cE_p^+(G) \leq \frac{1}{(\xi_f(G) - 1)^p} \cE_p^+(G) + \cE_p^-(G).$$
    Hence, we know that
    $$ \cE_p^+(G) \leq (\xi_f(G) - 1)^{p-1}\cE_p^-(G),$$ which means $$\xi_f(G) \geq 1+\left(\frac{\cE_p^+(G)}{\cE_p^-(G)}\right)^{\frac{1}{p-1}}.$$ Similarly, reversing the roles of the positive and negative parts in the preceding argument yields $$\xi_f(G) \geq 1+\left(\frac{\cE_p^-(G)}{\cE_p^+(G)}\right)^{\frac{1}{p-1}}.$$
Therefore, $$\xi_f(G) \geq 1 + \max\left\{\frac{\cE_p^+(G)}{\cE_p^-(G)}, \frac{\cE_p^-(G)}{\cE_p^+(G)}\right\}^{\frac{1}{p-1}}$$ holds for all $p>1$.

\subsection{Case $ p \in \{0, 1\} $}

Taking the limit as \( p \to 0^+ \) in inequality~\eqref{eeqqmajor6} yields the desired conclusion for \( p = 0 \). It therefore remains to justify the case \( p \to 1 \). To this end, define
\[
H(p) := \max\left\{ \frac{\mathcal{E}_p^+(G)}{\mathcal{E}_p^-(G)}, \frac{\mathcal{E}_p^-(G)}{\mathcal{E}_p^+(G)} \right\}^{\frac{1}{|p - 1|}}.
\]
We aim to show that the limit \( \lim_{p \to 1} H(p) \) exists. Set \(R(p) := \mathcal{E}_p^+(G)/\mathcal{E}_p^-(G)\), then \(H(p) = \max\left\{ R(p), R(p)^{-1} \right\}^{\frac{1}{|p - 1|}}\). 

Now we treat \( \mathcal{E}_p^+(G) \) as a function of \( p \). Its first-order Taylor expansion around \( p = 1 \) is given by
\[
\mathcal{E}_p^+(G) = \mathcal{E}_1^+(G) + (p - 1) \left. \frac{d}{dp} \mathcal{E}_p^+(G) \right|_{p=1} + o(p - 1)
= \mathcal{E}_1^+(G) \left[ 1 + (p - 1) \alpha_+ + o(p - 1) \right],
\]
where \[
\alpha_+ :=\frac{1}{\mathcal{E}_1^+(G)}  \left. \frac{d}{dp} \mathcal{E}_p^+(G) \right|_{p=1} =\frac{1}{\mathcal{E}_1^+(G)} \sum_{\lambda_i > 0} \lambda_i \log \lambda_i.
\]
Similarly, we have the expansion
\[
\mathcal{E}_p^-(G) = \mathcal{E}_1^-(G) \left[ 1 + (p - 1) \alpha_- + o(p - 1) \right],
\]
where 
\[
\alpha_- := \frac{1}{\mathcal{E}_1^-(G)} \sum_{\lambda_i < 0} |\lambda_i| \log |\lambda_i|.
\]Since \( \mathcal{E}_1^+(G) = \mathcal{E}_1^-(G) \), we obtain
\[
R(p) = \frac{\mathcal{E}_1^+(G) \left[ 1 + (p - 1)\alpha_+ + o(p - 1) \right]}
       {\mathcal{E}_1^-(G) \left[ 1 + (p - 1)\alpha_- + o(p - 1) \right]} 
= \frac{1 + (p - 1)\alpha_+ + o(p - 1)}
       {1 + (p - 1)\alpha_- + o(p - 1)}.
\]
Now applying the first-order expansion of the function \( \frac{1 + \varepsilon a}{1 + \varepsilon b} \approx 1 + \varepsilon(a - b) \), we obtain
\[
R(p) = 1 + (p - 1)(\alpha_+ - \alpha_-) + o(p - 1).
\]
Hence,
\[
\log R(p) = \log \left(1 + (p - 1)(\alpha_+ - \alpha_-) + o(p - 1)\right)
= (p - 1)(\alpha_+ - \alpha_-) + o(p - 1).
\]
Finally, we compute
\[
\log H(p) = \frac{1}{|p - 1|} \log \max \{ R(p), R(p)^{-1} \}
= \frac{1}{|p - 1|}  \abs{\log R(p)}
= \abs{ \alpha_+ - \alpha_- } + o(1),
\]
which implies
\[
\lim_{p \to 1} H(p) = \exp{|\alpha_+ - \alpha_-|}.
\]
Therefore, the limit exists. This completes the proof of Theorem~\ref{thmxifmcase}.\end{proof}

 \section{Comparisons with existing spectral bounds}\label{sec4}

\subsection{Derivation of $\chi_q$ for the Tilley graph}
The quantum chromatic number \( \chi_q(G) \) is always an integer, but it is not known to be computable in general. As a result, spectral bounds can play a valuable role in bounding or even determining \( \chi_q(G) \). For instance, in \cite{Elphick2017} it is shown that \( \chi_q(\mathrm{Clebsch}) = 4 \) and that \( \chi_q(\mathrm{GQ}(2,4)) \in \{5,6\} \). Further developments by \cite[Section~3]{Wocjan2020} established that $\chi_q(\mathrm{GQ}(2,4)) = 6$, $\chi_q(\mathrm{Kneser}_{p,2}) = p - 2$, $\chi_q(\mathrm{Gewirtz}) = 4$, and $\chi_q(\mathrm{Higman\text{-}Sims}) \in \{5,6\}$.

Our method is based on the observation that \( \chi_q(G) = \chi(G) \) if the spectral lower bound given in Theorem~\ref{thm3}, evaluated at an optimal value of \( p \), exceeds \( \chi(G) - 1 \). We restrict our analysis to graphs with \( \chi(G) > \omega(G) \), where \( \omega(G) \) denotes the clique number, since it is known that \(\omega(G) \le \chi_q(G) \le \chi(G).\)

\begin{exm}
The irregular \emph{Tilley graph} $H_1$, which arises in the study of Kempe chains, has \( \left|V(H_1)\right| = 12 \), \( \omega(H_1) = 3 \), and \( \chi(H_1) = 4 \), with spectrum approximately given by
\begin{align*}
\{4.86272,\ 1+\sqrt{2},\ \sqrt{5},\ 1.90542,\ 1-\sqrt{2},\ -1,\ -1,\ -1,\ -1.32557,\ -2,\ -\sqrt{5},\ -2.44258\}.
\end{align*}We compute
\[
   1+\max_{m = 1, \dots, n - 1}\left\{
    \frac{\mathcal{E}_2^{\pm}(H_1)}{\mathcal{E}_2^{\mp}(H_1)},\,
    \frac{n^{\pm}(H_1)}{n^{\mp}(H_1)},\, \frac{\sum_{i = 1}^{m} \lambda_i(H_1)}{- \sum_{i = 1}^{m} \lambda_{n - i + 1}(H_1)}
    \right\}
    = 1+\frac{\lambda_1(H_1)}{-\lambda_n(H_1)} \approx 2.99082.
\]This shows that the Hoffman bound and its extensions given in Wocjan and Elphick~\cite[Theorem~1]{Wocjan2013}, Ando and Lin~\cite{Ando2015}, Elphick and Wocjan~\cite{Elphick2017} all yield only \( \chi_q(H_1) \geq 3 \). However, for \( p = 13.3466 \), we have \[
    f(p;H_1) = 1 + \max\left\{ \frac{\mathcal{E}_p^+(H_1)}{\mathcal{E}_p^-(H_1)}, \frac{\mathcal{E}_p^-(H_1)}{\mathcal{E}_p^+(H_1)} \right\}^{\frac{1}{|p - 1|}} \approx 3.05114 > 3 >2.99082.
    \]Thus, Theorem~\ref{thm3} implies \( \chi_q(H_1) \geq 4 \). Since $\chi(H_1)=4$, we conclude that $\chi_q(H_1)=4$. Figure~\ref{Tilley-graph2} shows the graph of \(f(p;H_1)\) as a function of \( p \).

\begin{figure}[h]
\centering
\begin{tikzpicture}
  \begin{axis}[
    width=10cm,
    height=6cm,
    xlabel={$p$},
    ylabel={$f(p;H_1)$},
    domain=0.01:25,
    samples=200,
    thick,
    axis lines=middle,
    grid=both,
    xmin=0, xmax=25,
    ymin=2.905, ymax=3.06,
    enlargelimits=true,
    legend pos=north east]
    \addplot[blue, thick]
{
  1+(
    max(
      (4.86272^x + 2.41421^x + 2.23607^x + 1.90542^x)
      /
      (abs(-0.414214)^x + abs(-1)^x + abs(-1)^x + abs(-1)^x + abs(-1.32557)^x + abs(-2)^x + abs(-2.23607)^x + abs(-2.44258)^x),
      (abs(-0.414214)^x + abs(-1)^x + abs(-1)^x + abs(-1)^x + abs(-1.32557)^x + abs(-2)^x + abs(-2.23607)^x + abs(-2.44258)^x)
      /
     (4.86272^x + 2.41421^x + 2.23607^x + 1.90542^x)
    )
  )^(1 / abs(x - 1))
};    
  \end{axis}
\end{tikzpicture}
\caption{A plot of the function $f(p;H_1)$ over \( p \in [0, 25] \).}
\label{Tilley-graph2}
\end{figure}
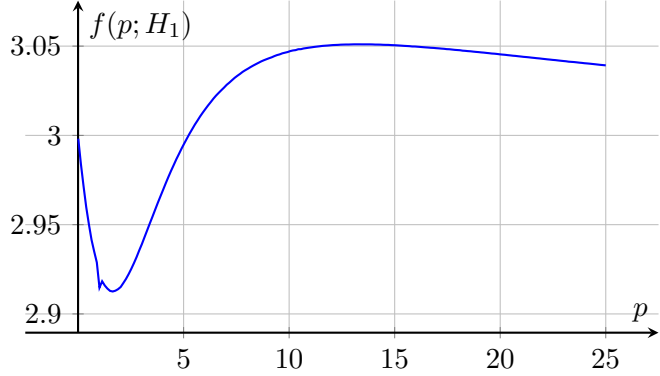 

\end{exm}


\subsection{Full range of optimal \texorpdfstring{$p$}{p}}
\begin{exm}Consider the 7-vertex graph \( H_2 \) shown in Figure~\ref{fig:example-graph1}, whose adjacency matrix has spectrum approximately given by
\begin{align*}
\{3.38896,\ 1.33155,\ 0,\ -0.638678,\ -1,\ -1,\ -2.08183\}.
\end{align*}

\begin{figure}[!h]
\centering
\begin{tikzpicture}[main_node/.style={circle,draw,minimum size=0.1em,inner sep=3pt]}]

\node[main_node] (0) at (-4.817752982124082, 4.83408983216184) {};
\node[main_node] (1) at (-4.857142857142858, 2.766525996190758) {};
\node[main_node] (2) at (-2.685268270621277, 4.857142857142858) {};
\node[main_node] (3) at (-2.727046739975563, 2.6618992901064287) {};
\node[main_node] (4) at (-0.46134669239627213, 3.7176265182215853) {};
\node[main_node] (5) at (1.8273856830402726, 4.722388032085533) {};
\node[main_node] (6) at (1.751919675111557, 2.5565477539376458) {};

 \path[draw, thick]
(0) edge node {} (2) 
(2) edge node {} (3) 
(1) edge node {} (3) 
(1) edge node {} (0) 
(0) edge node {} (3) 
(1) edge node {} (2) 
(4) edge node {} (2) 
(4) edge node {} (3) 
(5) edge node {} (4) 
(4) edge node {} (6) 
;

\end{tikzpicture}
\caption{The 7-vertex graph \( H_2 \)}
\label{fig:example-graph1}
\end{figure}
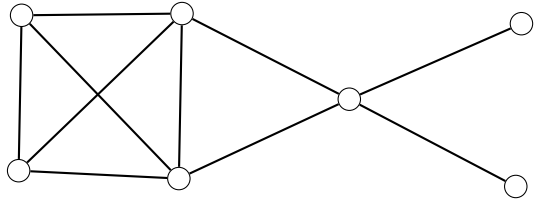
\noindent We compute
\[
   1+\max_{m = 1, \dots, n - 1}\left\{
   \frac{\mathcal{E}_2^{\pm}(H_2)}{\mathcal{E}_2^{\mp}(H_2)},\,
   \frac{n^{\pm}(H_2)}{n^{\mp}(H_2)},\, \frac{\sum_{i = 1}^{m} \lambda_i(H_2)}{- \sum_{i = 1}^{m} \lambda_{n - i + 1}(H_2)}
   \right\} = 1+\frac{n^-(H_2)}{n^+(H_2)} = 3.
   \]  However, for \( p = 0.562125 \), we have 
   \[
    f(p;H_2) =1+\max\left\{\frac{\mathcal{E}_{p}^+(H_2)}{\mathcal{E}_{p}^-(H_2)}, \frac{\mathcal{E}_{p}^-(H_2)}{\mathcal{E}_{p}^+(H_2)}\right\}^{\frac{1}{|p - 1|}} \approx 3.0064 > 3.
   \] Thus, for the graph \( H_2 \), a non-integer value of \( p \) performs better than any of the classical cases \( p \in \{0, 2, \infty\} \). Figure~\ref{fig:example-graph2} shows the graph of \(f(p;H_2)\) as a function of \( p \).

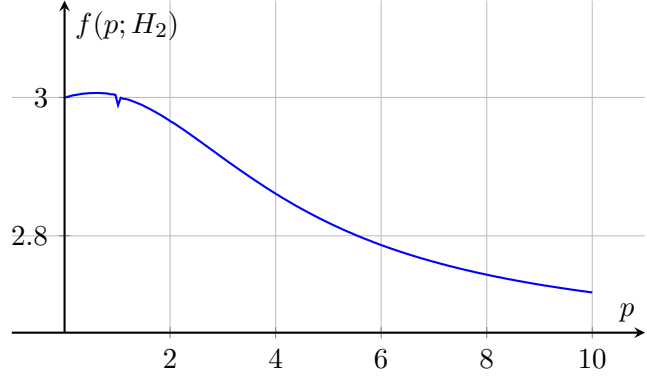
\begin{figure}[h]
\centering
\begin{tikzpicture}
  \begin{axis}[
    width=10cm,
    height=6cm,
    xlabel={$p$},
    ylabel={$f(p;H_2)$},
    domain=0.01:10,
    samples=200,
    thick,
    axis lines=middle,
    grid=both,
    xmin=0, xmax=10,
    ymin=2.7, ymax=3.1,
    enlargelimits=true,
    legend pos=north east]
    \addplot[blue, thick]
{
  1+(
    max(
      (3.38896^x + 1.33155^x)
      /
      (abs(-0.638678)^x + abs(-1)^x + abs(-1)^x + abs(-2.08183)^x),
      (abs(-0.638678)^x + abs(-1)^x + abs(-1)^x + abs(-2.08183)^x)
      /
      (3.38896^x + 1.33155^x)
    )
  )^(1 / abs(x - 1))
};    
  \end{axis}
\end{tikzpicture}
\caption{A plot of the function $ f(p;H_2)$ over \( p \in [0, 10] \).}
\label{fig:example-graph2}
\end{figure} 

\end{exm}

Now we let
\[
f(p;G) = 1 + \max\left\{ \frac{\mathcal{E}_p^+(G)}{\mathcal{E}_p^-(G)}, \frac{\mathcal{E}_p^-(G)}{\mathcal{E}_p^+(G)} \right\}^{\frac{1}{|p - 1|}}.
\]
Then Theorem~\ref{thm3} implies that
\[
\chi(G)  \geq \xi_f(G) \geq f(p;G) \quad \text{for all } p \geq 0.
\]Therefore, to obtain the optimal spectral lower bound, it suffices to compute \( \sup_{p \geq 0} f(p;G) \). Table \ref{tab:optimalp} presents example graphs, each of which achieves the maximum of \( f(p;G) \) within a different range of \( p \).

\begin{table}[H]
\centering
\begin{tabular}{|l|c|l|c|c|c|}
\hline
\textbf{Reference} & \textbf{Optimal \( p \)} & \textbf{Example graph \( G \)} & \textbf{ \( f(p;G) \)} & \textbf{$\left\lceil f(p;G) \right\rceil$} & \textbf{\( \chi(G) \)} \\
\hline
Elphick--Wocjan~\cite{Elphick2017} & \( p = 0 \) & Generalised Quadrangle \( \mathrm{GQ}(2,4) \) & 4.5 & 5 & 5 \\
This paper & \( 0 < p < 1 \) & Graph $H_2$ in Figure~\ref{fig:example-graph1} & 3.0064 & 4 & 4 \\
This paper & \( 1 < p < 2 \) & Beineke’s non-line graph \( G_5 \) & 3.1296 & 4 & 4 \\
Ando--Lin~\cite{Ando2015} & \( p = 2 \) & Regular complete \( r \)-partite graphs & \( r \) & \( r \) & \( r \) \\
This paper & \( 2 < p < \infty \) & Tilley graph & 3.0515 & 4 & 4 \\
Hoffman~\cite{Hoffman1970} & \( p \to \infty \) & Generalised Quadrangle \( \mathrm{GQ}(2,1) \) & 3 & 3 & 3 \\
\hline
\end{tabular}
\caption{
Graphs that are optimal for different values of \( p \) in the spectral lower bound. The first column indicates the earliest paper where the optimal \( p \) bound was established. The fourth column gives the maximum value of \( f(p;G) \), with its ceiling and the actual chromatic number shown in the last two columns.
}
\label{tab:optimalp}
\end{table}

\section{Properties of \texorpdfstring{$p$}{p}-energies when \texorpdfstring{$0 < p < 1$}{0 < p < 1} }\label{secproperity}
Several properties of the \( p \)-energy, as well as the positive and negative \( p \)-energies, have been discussed for \( p \geq 1 \) in~\cite[Section~4.1]{Arizmendi2023} and~\cite{Akbari2025}, respectively. In this section, we investigate analogous properties of the \( p \)-energy, as well as the positive and negative \( p \)-energies, in the range \( 0 < p < 1 \).

The following monotonicity property of the \( \ell_p \)-norm is well known:

\begin{lem}\label{propMonotonicity}
Let \( 0 < p < q < \infty \), and let \( a_1, a_2, \dots, a_n \) be positive real numbers. Then
\begin{equation}\label{eqschatten1}
\left( \sum_{i=1}^n a_i^p \right)^{1/p} \geq \left( \sum_{i=1}^n a_i^q \right)^{1/q}.    
\end{equation}
\end{lem}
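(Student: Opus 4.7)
The plan is to reduce to the normalized case by the homogeneity of the $\ell_p$ and $\ell_q$ "norms" and then use the pointwise monotonicity of $t \mapsto t^r$ on $[0,1]$ as $r$ varies. Specifically, both sides of \eqref{eqschatten1} are positively homogeneous of degree $1$ in the vector $(a_1, \dots, a_n)$, so it suffices to prove the inequality under the normalization $\sum_{i=1}^n a_i^q = 1$. In that case the desired conclusion reduces to showing $\sum_{i=1}^n a_i^p \geq 1$.

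Under the normalization, each $a_i$ satisfies $0 < a_i \leq 1$, since $a_i^q \leq \sum_{j} a_j^q = 1$ and all $a_i$ are positive. I would then invoke the elementary fact that for $0 < t \leq 1$ and $0 < p < q$, we have $t^p \geq t^q$; this follows at once from writing $t^p = t^q \cdot t^{p-q}$ and noting $t^{p-q} \geq 1$ because $p - q < 0$ and $t \leq 1$. Applying this pointwise with $t = a_i$ and summing over $i$ gives $\sum_{i=1}^n a_i^p \geq \sum_{i=1}^n a_i^q = 1$, from which $(\sum_i a_i^p)^{1/p} \geq 1 = (\sum_i a_i^q)^{1/q}$.

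Undoing the normalization recovers \eqref{eqschatten1} in full generality: given arbitrary positive $a_1, \dots, a_n$, set $S = (\sum_i a_i^q)^{1/q}$ and apply the normalized inequality to $b_i = a_i / S$, which multiplies both sides of \eqref{eqschatten1} by $S$. I do not anticipate any significant obstacle; the proof is essentially a two-line manipulation, and no tool beyond the monotonicity of real powers on $[0,1]$ is required. An alternative route via Jensen's inequality applied to the concave function $x \mapsto x^{p/q}$ on $\mathbb{R}_{\geq 0}$ with the probability weights $a_i^q / \sum_j a_j^q$ would yield the same conclusion, but the normalization argument is more transparent.
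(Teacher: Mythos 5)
Your proof is correct: the normalization $\sum_i a_i^q = 1$ forces each $a_i \in (0,1]$, the pointwise bound $a_i^p \geq a_i^q$ then gives $\sum_i a_i^p \geq 1$, and homogeneity of degree $1$ recovers the general case. The paper states Lemma~\ref{propMonotonicity} without proof as a well-known fact, so there is nothing to compare against; your argument is the standard one and is complete.
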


Moreover, the following classical bound on the 1-energy of a graph can be found in~\cite[Theorem~5.2]{Li2012}.

\begin{lem}[\cite{Li2012}]\label{lem1energy}
Let \( G \) be a graph with \( m \) edges. Then
\[
2\sqrt{m} \leq \mathcal{E}_1(G) \leq 2m.
\]
\end{lem}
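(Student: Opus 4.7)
The plan is to prove the two bounds separately, with both arguments resting on the two basic identities $\tr(A_G) = \sum_i \lambda_i = 0$ (since $A_G$ has zero diagonal) and $\tr(A_G^2) = \sum_i \lambda_i^2 = 2m$ (sum of squared entries of $A_G$).

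For the lower bound $\mathcal{E}_1(G) \geq 2\sqrt{m}$, I would square the energy and expand it as
\[
\mathcal{E}_1(G)^2 = \Bigl(\sum_{i=1}^n \abs{\lambda_i}\Bigr)^2 = \sum_i \lambda_i^2 + 2\sum_{i < j}\abs{\lambda_i}\abs{\lambda_j}.
\]
The first term contributes $2m$. For the cross-term I would apply the triangle inequality, $\sum_{i<j}\abs{\lambda_i}\abs{\lambda_j} \geq \bigl|\sum_{i<j}\lambda_i\lambda_j\bigr|$, and then evaluate the elementary symmetric polynomial on the right via Newton's identity together with the two trace identities, obtaining $\sum_{i<j}\lambda_i\lambda_j = \frac{1}{2}[(\sum_i\lambda_i)^2 - \sum_i\lambda_i^2] = -m$. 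Combining the pieces gives $\mathcal{E}_1(G)^2 \geq 2m + 2m = 4m$.

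For the upper bound $\mathcal{E}_1(G) \leq 2m$, I would first observe that isolated vertices contribute $0$ to both $\mathcal{E}_1(G)$ and $m$, so without loss of generality every vertex has degree at least $1$, which forces $n \leq \sum_{v} d(v) = 2m$. The Cauchy--Schwarz inequality $\bigl(\sum_i \abs{\lambda_i}\cdot 1\bigr)^2 \leq n \sum_i \lambda_i^2 = 2mn$ then yields $\mathcal{E}_1(G) \leq \sqrt{2mn} \leq \sqrt{(2m)^2} = 2m$.

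Neither half presents a real obstacle: this statement appears in the cited reference~\cite{Li2012} as a classical fact. The only conceptually non-routine step is recognizing that the trace identities alone, rather than any finer spectral case analysis, drive the lower bound, and that one must discard isolated vertices before applying Cauchy--Schwarz in order to bootstrap the standard McClelland-type estimate $\sqrt{2mn}$ up to $2m$.
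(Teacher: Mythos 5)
Your proof is correct. The paper does not prove this lemma at all --- it is quoted verbatim from the cited reference \cite{Li2012} as a known fact --- and your two arguments (squaring the energy and evaluating $\sum_{i<j}\lambda_i\lambda_j = -m$ via $\operatorname{tr}(A_G)=0$ and $\operatorname{tr}(A_G^2)=2m$ for the lower bound; discarding isolated vertices to get $n \leq 2m$ and then applying Cauchy--Schwarz for the upper bound) are exactly the classical McClelland-type derivations given there. Nothing further is needed.
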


Analogous to~\cite[Corollary~4.3]{Arizmendi2023}, we now present a lower bound for the \( p \)-energy in the range \( 0 < p < 1 \), expressed in terms of the number of edges.

\begin{prop}\label{proppenergyedgebound}
Let \( 0 < p < 1 \), and let \( G \) be a graph with \( m \) edges. Then
\[
\mathcal{E}_p(G) \geq 2m^{p/2}.
\]
\end{prop}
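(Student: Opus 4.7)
The plan is to reduce the bound to Lemmas~\ref{propMonotonicity} and~\ref{lem1energy} by exploiting the trace identity $\mathrm{tr}(A_G)=0$, which forces the sum of the positive eigenvalues and the sum of the absolute values of the negative eigenvalues to be equal. Applying Lemma~\ref{propMonotonicity} directly with $q=2$ and $a_i=|\lambda_i|$ yields only $\mathcal{E}_p(G)\ge (2m)^{p/2}=2^{p/2}m^{p/2}$, which is too weak since $2^{p/2}<2$ when $p<1$. The factor $2$ has to come from the fact that both halves of the spectrum contribute, so the main idea is to apply the monotonicity of the $\ell_p$-sum separately on the positive part and on the negative part.

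First I would split $\mathcal{E}_p(G)=\mathcal{E}_p^+(G)+\mathcal{E}_p^-(G)$ and set $T=\sum_{\lambda_i>0}\lambda_i=\sum_{\lambda_i<0}|\lambda_i|=\tfrac{1}{2}\mathcal{E}_1(G)$, where the first equality uses $\sum_i\lambda_i=\mathrm{tr}(A_G)=0$. Applying Lemma~\ref{propMonotonicity} with exponents $p<q=1$ to the positive sequence $\{\lambda_i:\lambda_i>0\}$ gives
\[
\mathcal{E}_p^+(G)=\sum_{\lambda_i>0}\lambda_i^p\;\geq\;\Bigl(\sum_{\lambda_i>0}\lambda_i\Bigr)^{p}=T^p,
\]
and the same argument applied to $\{|\lambda_i|:\lambda_i<0\}$ yields $\mathcal{E}_p^-(G)\ge T^p$.

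Adding the two inequalities gives $\mathcal{E}_p(G)\ge 2T^p=2\bigl(\mathcal{E}_1(G)/2\bigr)^p=2^{1-p}\mathcal{E}_1(G)^p$. Then I would invoke Lemma~\ref{lem1energy}, which states $\mathcal{E}_1(G)\ge 2\sqrt{m}$, to conclude
\[
\mathcal{E}_p(G)\;\geq\;2^{1-p}(2\sqrt{m})^{p}\;=\;2\,m^{p/2},
\]
as claimed. The edge case $m=0$ (empty graph) is trivial since both sides vanish.

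There is no real obstacle here; the only subtlety is choosing the right exponent in Lemma~\ref{propMonotonicity}: one must compare $\ell_p$ with $\ell_1$ (not $\ell_2$) on each half of the spectrum so that the trace-zero identity can provide the extra multiplicative factor of $2$. Equality is attained when $G=K_2$, where $\mathcal{E}_p(K_2)=2$ and $m=1$, confirming that the constant $2$ in the bound cannot be improved.
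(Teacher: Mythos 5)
Your proof is correct and follows essentially the same route as the paper's: both split into $\mathcal{E}_p^+ + \mathcal{E}_p^-$, use the trace-zero identity to get $\mathcal{E}_1^+(G) = \mathcal{E}_1^-(G) = \tfrac12\mathcal{E}_1(G) \geq \sqrt{m}$ via Lemma~\ref{lem1energy}, and then apply the power-mean monotonicity of Lemma~\ref{propMonotonicity} with $q=1$ to each half of the spectrum separately. Your remarks on why comparing with $\ell_1$ rather than $\ell_2$ is necessary, and on sharpness at $K_2$, are accurate but not needed for the argument.
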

\begin{proof}
By Lemma~\ref{lem1energy}, we have
\[
\mathcal{E}_1^+(G) = \mathcal{E}_1^-(G) = \frac{1}{2} \mathcal{E}_1(G) \geq \sqrt{m}.
\]
Then, applying Lemma~\ref{propMonotonicity}, it follows that
\begin{equation}\label{eqedgeppp}
\mathcal{E}_p^+(G) \geq \left( \mathcal{E}_1^+(G) \right)^p \geq \left( \sqrt{m} \right)^p = m^{p/2}.
\end{equation}
Similarly, we obtain the same lower bound for \( \mathcal{E}_p^-(G) \). Therefore,
\[
\mathcal{E}_p(G) = \mathcal{E}_p^+(G) + \mathcal{E}_p^-(G) \geq  2m^{p/2}.
\] This completes the proof.\end{proof}

Inspired by a conjecture proposed by Elphick, Wocjan, Farber, and Goldberg in~\cite[Conjecture~1]{Elphick2016}, we propose the following \( p \)-energy analogue:

\begin{conj}\label{ewfgpenergy}
Let \( 0 \leq p \leq 2 \), and let \( G \) be a connected graph with \( n \) vertices. Then
\begin{equation}\label{ewfgeq}
\min\left\{\mathcal{E}_p^+(G) , \mathcal{E}_p^-(G)\right\} \geq (n - 1)^{p/2}.
\end{equation}
\end{conj}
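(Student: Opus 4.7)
The plan is to split Conjecture~\ref{ewfgpenergy} at $p = 1$ and handle each subinterval via the $\ell_p$-monotonicity in Lemma~\ref{propMonotonicity}, reducing the desired bound on $\mathcal{E}_p^\pm(G)$ to either the $1$-energy estimate (Lemma~\ref{lem1energy}) or the $p = 2$ specialization of the conjecture itself.

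For $0 \leq p \leq 1$, the argument essentially mirrors Proposition~\ref{proppenergyedgebound}, with $m$ replaced by $n - 1$. We handle $p = 0$ directly: any connected graph on $n \geq 2$ vertices satisfies $A_G \neq O$ together with $\tr(A_G) = 0$, which forces at least one positive and at least one negative eigenvalue, so $\min\{n^+, n^-\} \geq 1 = (n-1)^0$. For $0 < p \leq 1$, applying Lemma~\ref{propMonotonicity} to the positive eigenvalues with exponents $p$ and $1$ yields $\mathcal{E}_p^+(G) \geq \bigl(\mathcal{E}_1^+(G)\bigr)^p$, and using $\mathcal{E}_1^+(G) = \tfrac{1}{2}\mathcal{E}_1(G) \geq \sqrt{m} \geq \sqrt{n - 1}$ from Lemma~\ref{lem1energy} together with the connectivity hypothesis $m \geq n - 1$, we conclude $\mathcal{E}_p^+(G) \geq (n-1)^{p/2}$. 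The identical argument for the negative eigenvalues bounds $\mathcal{E}_p^-(G)$.

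For $1 \leq p \leq 2$, the same monotonicity applied with exponents $p$ and $2$ yields $\mathcal{E}_p^\pm(G) \geq \bigl(\mathcal{E}_2^\pm(G)\bigr)^{p/2}$, so the conjecture in this range reduces to its $p = 2$ specialization
\[
\min\bigl\{\mathcal{E}_2^+(G),\, \mathcal{E}_2^-(G)\bigr\} \geq n - 1,
\]
which is precisely the original conjecture of Elphick, Wocjan, Farber, and Goldberg~\cite[Conjecture~1]{Elphick2016}.

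The main obstacle is therefore the $p = 2$ case, open since~\cite{Elphick2016}. The monotonicity used above cannot be reversed: Lemma~\ref{propMonotonicity} only lets one pass from larger exponents to smaller ones, so the $p = 1$ bound $\mathcal{E}_1^\pm(G) \geq \sqrt{n - 1}$ cannot be promoted to $p = 2$ by this route. A natural line of attack would be to adapt the stochastic block decomposition machinery of Section~\ref{sec3}: Proposition~\ref{lem:fractional-decomposition} at $p = 2$ relates $\mathcal{E}_2^\pm(G)$ to invariants such as $\xi_f(G)$ or $\chi_f(G)$, and one could hope to combine such a relation with additional structural input (connectivity, minimum degree) to force $\mathcal{E}_2^\pm(G) \geq n - 1$. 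A sensible warm-up would be to verify the conjecture on restricted graph classes (bipartite graphs, trees, distance-regular graphs), where the symmetry or sparsity of the spectrum makes the positive and negative $2$-energies more tractable.
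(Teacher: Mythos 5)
Your proposal matches the paper's own treatment of this statement, which is a conjecture the paper does not fully prove: the paper establishes the $0 < p < 1$ range in Theorem~\ref{analogelphick} exactly as you do (Lemma~\ref{propMonotonicity} with exponents $p$ and $1$, then Lemma~\ref{lem1energy} and $m \geq n-1$), notes $p \in \{0,1\}$ are easy, and observes in a remark that the remaining range reduces via the same monotonicity to the open $p=2$ case of~\cite[Conjecture~1]{Elphick2016}. Your identification of the $p=2$ case as the genuine obstruction, and your reductions in both subintervals, are correct and essentially identical to the paper's.
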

\begin{remark}
When \( p = 2 \), Conjecture~\ref{ewfgpenergy} coincides with the original conjecture proposed in~\cite[Conjecture~1]{Elphick2016}. For the case \( p = 1 \), the inequality follows immediately from Lemma~\ref{lem1energy}. The case \( p = 0 \) is trivial.
\end{remark}
\begin{remark}
When \(0 < p < 2\), Lemma~\ref{propMonotonicity} implies that
\[
(\mathcal{E}_p^+(G))^{1/p} \geq (\mathcal{E}_2^+(G))^{1/2}, \quad \text{and} \quad (\mathcal{E}_p^-(G))^{1/p} \geq (\mathcal{E}_2^-(G))^{1/2}.
\]
Therefore, Conjecture~\ref{ewfgpenergy} is in fact equivalent to \cite[Conjecture~1]{Elphick2016}. Nevertheless, proving the conjecture for any specific value of \(p < 2\) is still valuable; for example, the case \(p = 1.5\) might be more tractable.
\end{remark}

We are now in a position to address the first half of Conjecture~\ref{ewfgpenergy}, namely, to present a sharp lower bound for the positive and negative \( p \)-energies when \( 0 < p < 1 \).

\begin{thm}\label{analogelphick}
Let \( 0 < p < 1 \), and let \( G \) be a connected graph with \( n \) vertices and \( m \) edges. Then
\begin{equation}\label{sharpp1}
\min\left\{\mathcal{E}_p^+(G) , \mathcal{E}_p^-(G)\right\} \geq (n - 1)^{p/2}.
\end{equation}
\end{thm}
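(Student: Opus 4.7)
The plan is to adapt the argument of Proposition~\ref{proppenergyedgebound} and exploit the fact that a connected graph on $n$ vertices has at least $n-1$ edges. The key observation is that the chain of inequalities
\[
\mathcal{E}_p^{\pm}(G) \;\geq\; \bigl(\mathcal{E}_1^{\pm}(G)\bigr)^p \;\geq\; m^{p/2}
\]
established in~\eqref{eqedgeppp} already gives the desired bound once we replace $m$ by $n-1$.

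First I would record that since $A_G$ has zero trace, the positive and negative eigenvalues of $A_G$ sum to the same absolute value, so
\[
\mathcal{E}_1^+(G) \;=\; \mathcal{E}_1^-(G) \;=\; \tfrac{1}{2}\mathcal{E}_1(G).
\]
Combining this with Lemma~\ref{lem1energy} yields $\mathcal{E}_1^{\pm}(G)\geq \sqrt{m}$.

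Next, applying Lemma~\ref{propMonotonicity} with exponents $p<1$ and $q=1$ to the positive (respectively negative) eigenvalues of $A_G$ gives
\[
\bigl(\mathcal{E}_p^{\pm}(G)\bigr)^{1/p} \;\geq\; \mathcal{E}_1^{\pm}(G) \;\geq\; \sqrt{m},
\]
so that $\mathcal{E}_p^{\pm}(G)\geq m^{p/2}$. Finally, since $G$ is connected on $n$ vertices, it contains a spanning tree and therefore $m\geq n-1$, which immediately yields~\eqref{sharpp1}.

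There is no real obstacle here; the work was already done in Proposition~\ref{proppenergyedgebound}, and the only new ingredient is the elementary combinatorial lower bound $m \geq n-1$ afforded by connectedness. The sharpness claim (implicit in the theorem statement) can be verified on the star $K_{1,n-1}$, whose nonzero eigenvalues are $\pm\sqrt{n-1}$, giving $\mathcal{E}_p^{\pm}(K_{1,n-1}) = (n-1)^{p/2}$ with equality.
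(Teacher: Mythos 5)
Your proof is correct and follows exactly the paper's argument: it invokes the chain $\mathcal{E}_p^{\pm}(G) \geq (\mathcal{E}_1^{\pm}(G))^p \geq m^{p/2}$ from~\eqref{eqedgeppp} and then uses $m \geq n-1$ for connected graphs. The verification of sharpness on the star is a nice addition that matches the remark following the theorem.
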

\begin{proof}
By~\eqref{eqedgeppp}, we obtain
\[
\mathcal{E}_p^+(G) \geq m^{p/2}.
\]
Since \( G \) is connected, it follows that \( m \geq n - 1 \), and hence
\[
\mathcal{E}_p^+(G) \geq (n - 1)^{p/2}.
\]
A similar argument yields the same lower bound for \( \mathcal{E}_p^-(G) \). This completes the proof.
\end{proof}
\begin{remark}
Inequality~\eqref{sharpp1} is sharp. For instance, equality is attained when \( G \) is the star graph \( S_n \).    
\end{remark}

\section{Concluding remarks}\label{secconclusion}

In this paper, we established a unified family of lower bounds for the chromatic number \( \chi(G) \), the fractional chromatic number \( \chi_f(G) \), the quantum chromatic number \( \chi_q(G) \), the orthogonal rank \(\xi(G) \) and the projective rank \(\xi_f(G) \), all expressed in terms of the positive and negative \( p \)-energies of a graph \( G \). In certain cases, these bounds strictly improve upon existing spectral bounds. As a direct corollary, we resolve two conjectures posed by Elphick and Wocjan in~\cite[Conjecture~6]{Elphick2017} and~\cite[Section~6]{Wocjan2019}.

In Section~\ref{secIntroduction}, we mentioned that Theorem~\ref{thm3}, in the limit as \( p \to \infty \), recovers the classical Hoffman bound. We now formally prove this result:

\begin{prop}\label{prophoffman}
Theorem~\ref{thm3} implies the Hoffman bound given in \eqref{eqhoffman}.
\end{prop}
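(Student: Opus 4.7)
The plan is to derive the Hoffman bound by taking the limit $p \to \infty$ in the inequality of Theorem~\ref{thm3}. Since the theorem holds for every fixed $p \geq 0$, and the left-hand side $\chi(G)$ is independent of $p$, we have $\chi(G) \geq 1 + \sup_{p \geq 0} \Phi(p)$ where $\Phi(p)$ denotes the $p$-energy ratio expression. In particular, $\chi(G) \geq 1 + \limsup_{p \to \infty} \Phi(p)$, and it suffices to compute this limit explicitly and show it equals $\lambda_1/(-\lambda_n)$. Throughout, I may assume $G$ has at least one edge, so $\lambda_1 > 0$ and $\lambda_n < 0$; moreover, by Perron--Frobenius, $\lambda_1 = \rho(A_G) \geq |\lambda_n|$.

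First, I would establish the asymptotics of $\mathcal{E}_p^{\pm}(G)^{1/p}$ as $p \to \infty$. This is the standard fact that the $\ell_p$-norm of a finite non-negative sequence converges to its $\ell_\infty$-norm: applied to the sequences of positive eigenvalues and of absolute values of negative eigenvalues of $A_G$, this yields
\[
\lim_{p \to \infty} \mathcal{E}_p^+(G)^{1/p} = \lambda_1, \qquad \lim_{p \to \infty} \mathcal{E}_p^-(G)^{1/p} = |\lambda_n|.
\]
(These limits are immediate from bounding each energy above and below by the maximum term times the number of terms, and taking $p$-th roots.)

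Second, since $\lambda_1 \geq |\lambda_n| > 0$, for all sufficiently large $p$ we have $\mathcal{E}_p^+(G) \geq \mathcal{E}_p^-(G)$, so the maximum in Theorem~\ref{thm3} equals $\mathcal{E}_p^+(G)/\mathcal{E}_p^-(G)$. Rewriting the bound as
\[
\left(\frac{\mathcal{E}_p^+(G)}{\mathcal{E}_p^-(G)}\right)^{\frac{1}{p-1}} = \left[\frac{\mathcal{E}_p^+(G)^{1/p}}{\mathcal{E}_p^-(G)^{1/p}}\right]^{\frac{p}{p-1}},
\]
the bracketed base converges to $\lambda_1/|\lambda_n|$ by step one, while the exponent $p/(p-1)$ converges to $1$. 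Therefore the entire expression tends to $\lambda_1/|\lambda_n| = \lambda_1/(-\lambda_n)$, and combining with Theorem~\ref{thm3} gives $\chi(G) \geq 1 + \lambda_1/(-\lambda_n)$, which is exactly \eqref{eqhoffman}.

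There is no serious obstacle here; the only mild subtlety is the bipartite edge case $\lambda_1 = |\lambda_n|$, in which the limit equals $1$ and the bound degenerates to $\chi(G) \geq 2$, consistent with (and equal to) the Hoffman bound in that regime. The argument is a direct limit computation and uses only the standard $\ell_p \to \ell_\infty$ convergence, together with the Perron--Frobenius fact $\lambda_1 \geq |\lambda_n|$.
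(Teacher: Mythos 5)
Your proof is correct and follows essentially the same route as the paper: both arguments take $p \to \infty$ in the $p>1$ case of Theorem~\ref{thm3} and use the convergence of $\mathcal{E}_p^{\pm}(G)^{1/p}$ (equivalently, the $p$th power mean) to $\lambda_1$ and $|\lambda_n|$, together with $\tfrac{p}{p-1}\to 1$. The only cosmetic difference is that the paper inserts the normalizing factors $n^+$ and $n^-$ to phrase the limit as a power-mean statement, while you invoke the $\ell_p\to\ell_\infty$ convergence directly; your extra remarks on which term attains the maximum and on the bipartite edge case are harmless additions.
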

\begin{proof}
For \( p>1 \), Theorem~\ref{thm3} implies the bound
\[
\xi_f(G) \geq 1 + \left( \frac{\mathcal{E}_p^+(G)}{\mathcal{E}_p^-(G)} \right)^{\frac{1}{p - 1}} = 1 + \left( \frac{\mathcal{E}_p^+(G)}{n^+}\frac{n^+}{n^-}\frac{n^-}{\mathcal{E}_p^-(G)} \right)^{\frac{1}{p}\frac{p}{p - 1}}.
\] It is well known that the \( p \)th power mean of a sequence of positive numbers converges to its maximum as \( p \to \infty \). Thus, we have
\[
\left( \frac{\mathcal{E}_p^+(G)}{n^+} \right)^{1/p} \to \lambda_1, \quad
\left( \frac{\mathcal{E}_p^-(G)}{n^-} \right)^{1/p} \to -\lambda_n \quad \text{as } p \to \infty,
\]
where \( \lambda_1 \) and \( \lambda_n \) are the largest and smallest eigenvalues of the adjacency matrix of \( G \), respectively.
Therefore, we have
\[
1 + \left( \frac{\mathcal{E}_p^+(G)}{n^+}  \frac{n^+}{n^-}  \frac{n^-}{\mathcal{E}_p^-(G)} \right)^{\frac{1}{p} \frac{p}{p - 1}}
\to 1 + \frac{\lambda_1}{-\lambda_n}
\quad \text{as } p \to \infty.
\]
This completes the proof. \end{proof}


Therefore, this work can also be viewed as a unification and strengthening of three classical spectral lower bounds on the chromatic number, namely those of Hoffman~\cite{Hoffman1970}, Ando and Lin~\cite{Ando2015}, and Elphick and Wocjan~\cite{Elphick2017}.


It is worth noting that our bound in Theorem~\ref{thm3} cannot be extended to the case \( p < 0 \). As a counterexample, consider the 5-cycle \( C_5 \). For \( p = -4 \), a direct computation yields
\[
1 + \left( \frac{\mathcal{E}_{-4}^+(C_5)}{\mathcal{E}_{-4}^-(C_5)} \right)^{1/5} > 3.1,
\]
which contradicts the fact that \( \chi(C_5) = 3 \).

Our proof does not follow the elegant method of Guo and Spiro~\cite{Guo2024}, as their approach appears to rely crucially on the assumption that \( p = 2 \). Nonetheless, one may attempt to generalize \cite[Theorem~3]{Guo2024} to more values of \( p \).

\begin{ques}
    For which values of \( p \geq 0 \) does the following hold:  
    if there exists a homomorphism from a graph \( G \) to an edge-transitive graph \( H \), then
\begin{equation}\label{eq:conclusionguo1}
        \frac{\lambda_{\max}(H)}{\abs{\lambda_{\min}(H)}} \geq \max\left\{
        \frac{\mathcal{E}_p^+(G)}{\mathcal{E}_p^-(G)},
        \frac{\mathcal{E}_p^-(G)}{\mathcal{E}_p^+(G)}
        \right\}^{\frac{1}{\abs{p - 1}}}.
\end{equation}
\end{ques} We remark that inequality~\eqref{eq:conclusionguo1} cannot hold for all values of \( p \geq 0 \). For instance, if \( G = H = C_5 \), then the inequality fails for the entire interval \( p \in [0, 1.154] \).

The vector chromatic number satisfies \( \chi_v(G) \le \xi_f(G) \)~\cite{Wocjan2019}. This naturally raises the question of whether \( \chi_v(G) \) can replace \( \xi_f(G) \) in Theorem~\ref{thm3}. The answer is negative: for example, the cycle \( C_5 \) serves as a counterexample for all \(0 \leq  p \leq 1.154 \). On the other hand, Coutinho, Spier, and Zhang~\cite{Coutinho2024} proved that Theorem~\ref{thm3} does provide a valid lower bound on \( \chi_v(G) \) in the case \( p = 2 \). Furthermore, Galtman~\cite{Galtman2000} and independently Bilu~\cite{Bilu2006} showed that the Hoffman bound is also a lower bound for the vector chromatic number, and thus Theorem~\ref{thm3} provides a valid lower bound on \( \chi_v(G) \) when \( p = \infty \). Motivated by these results, we propose the following conjecture:

\begin{conj}\label{conj:vector-chromatic-bound}
For all \( p \geq 2 \), the vector chromatic number of any graph \( G \) satisfies
\[
\chi_v(G) \geq 1 + \max\left\{ \frac{\mathcal{E}_p^+(G)}{\mathcal{E}_p^-(G)}, \frac{\mathcal{E}_p^-(G)}{\mathcal{E}_p^+(G)} \right\}^{\frac{1}{p - 1}}.
\]
\end{conj}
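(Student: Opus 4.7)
The plan is to adapt the framework used to prove Theorem~\ref{thm3} for $\xi_f(G)$ to the vector chromatic setting, replacing the $d/r$-representation and its Haar-averaged projector $Q$ with an object extracted from an optimal vector coloring. Let $t = \chi_v(G)$ and fix unit vectors $\{u_v\}_{v\in V(G)} \subset \mathbb{R}^k$ with $\langle u_v, u_w\rangle \leq -\frac{1}{t-1}$ for every edge $vw$. Letting $U$ denote the $k \times n$ matrix whose columns are the $u_v$, my first step is to form the positive semi-definite Gram matrix $M = U^T U$, which has unit diagonal and satisfies $M_{vw} \leq -\frac{1}{t-1}$ on edges. I then reuse the Ando--Lin splitting from Section~\ref{sec3}: setting $\epsilon = 1/(t-1)$, $B = (1+\epsilon)A_G^+$, and $C = \epsilon A_G^+ + A_G^-$, both $B$ and $C$ are positive semi-definite with $B - C = A_G$. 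The vector-coloring constraint gives $\langle M, A_G\rangle = 2\sum_{vw \in E} M_{vw} \leq 0$, hence $\langle M, B\rangle \leq \langle M, C\rangle$; this inner-product inequality would play the role of the operator identity $Q(A_G \otimes I_d)Q = O$ from Lemma~\ref{lem:quantum-to-block}.

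The second step is to establish a vector-coloring analogue of Proposition~\ref{lem:fractional-decomposition}. Specifically, for $p \geq 2$ I would aim to prove an inequality of the form
\[
\norm{B}_p^p \leq t^{p-1}\norm{C}_p^p,
\]
mirroring inequality~\eqref{eqxi_f3} with $\xi_f(G)$ replaced by $t$. To do so, I would combine the trace inequality $\langle M, B\rangle \leq \langle M, C\rangle$ with a Schur-convexity argument (Proposition~\ref{prop:schur-concave}) applied to the spectrum of $M$, exploiting that $M$ has unit diagonal, $\tr(M) = n$, and $\lambda_{\max}(M)$ is controlled in terms of $t$ via the vector-coloring structure. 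If this step succeeds, the final calculation proceeds exactly as in Section~\ref{sec3}: substituting the definitions of $B, C$ and optimizing the choice $\epsilon = 1/(t-1)$ yields the conjectured bound.

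The main obstacle is the absence of the projector identity $Q^2 = Q$, which is essential to Lemma~\ref{lem:LHS} via Parseval's identity in an eigenbasis of the projector. For a general Gram matrix $M$, the clean trace inequality $\tr((QMQ)^p) \leq \tr(QM^pQ)$ is not available, and for $p > 2$ the function $t \mapsto t^p$ is not even operator convex (as noted in the remark following Lemma~\ref{lem:LHS}), so no direct substitute is visible. A plausible workaround is complex interpolation between the two known endpoints --- the $p = 2$ case of Coutinho, Spier, and Zhang~\cite{Coutinho2024} and the $p = \infty$ Hoffman bound of Galtman~\cite{Galtman2000} and Bilu~\cite{Bilu2006} --- via a Schatten-norm version of Hadamard's three-lines theorem. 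The catch is that naive interpolation produces an exponent of roughly $2/p$ rather than the sharp $1/(p-1)$; recovering the conjectured exponent would require constructing an analytic family of inequalities carefully tailored to the vector-coloring constraint $M_{vw} \leq -1/(t-1)$ on edges. I expect this exponent-tightening to be the hardest step of the proof.
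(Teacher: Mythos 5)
First, be aware that the statement you are proving is presented in the paper as Conjecture~\ref{conj:vector-chromatic-bound}: the authors give no proof, only computational verification (all graphs on at most 9 vertices and named graphs on at most 100 vertices) together with the two known endpoint cases, $p=2$ due to Coutinho, Spier, and Zhang~\cite{Coutinho2024} and $p\to\infty$ due to Galtman~\cite{Galtman2000} and Bilu~\cite{Bilu2006}. So there is no proof in the paper to compare against, and your proposal must be judged as an attempt at an open problem.

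As such an attempt, it has a genuine gap which you partly acknowledge but which is more severe than you indicate. In the paper's proof of Theorem~\ref{thmxifmcase}, the engine of the argument is the \emph{matrix identity} $Q\widetilde{B}Q = Q\widetilde{C}Q$: because $Q$ is an orthogonal projector and the two compressed matrices are literally equal, their entire spectra coincide, and Proposition~\ref{lem:fractional-decomposition} converts this spectral coincidence into the norm comparison $\norm{B}_p^p \le \xi_f(G)^{p-1}\norm{C}_p^p$. Your first step replaces this with the single scalar inequality $\langle M, B\rangle \le \langle M, C\rangle$, where $M$ is the Gram matrix of an optimal vector coloring. One linear functional of $B$ and $C$ cannot control the ratio of their Schatten $p$-norms for any $p\ne 1$: nothing prevents $B$ from concentrating its trace in one large eigenvalue while $C$ spreads it out, with $\langle M, B-C\rangle\le 0$ still holding. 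The ``Schur-convexity argument applied to the spectrum of $M$'' that you invoke to bridge this is not specified, and no majorization relation between $\lambda(B)$ and a scaled copy of $\lambda(C)$ follows from the data you have extracted; note also that the known $p=2$ case is proved by conic/SDP duality, a mechanism entirely absent from your outline. Your fallback via complex interpolation between $p=2$ and $p=\infty$ fails for the reason you yourself give --- it yields an exponent of order $2/p$ rather than $1/(p-1)$ --- and, in addition, the quantities $\cE_p^{\pm}(G)$ do not arise as Schatten norms of a single analytic family of operators, so the three-lines theorem does not directly apply. In short, the decisive step (your ``second step'') is missing, the proposal is a research plan rather than a proof, and the statement remains open.
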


We have tested Conjecture~\ref{conj:vector-chromatic-bound} on all graphs with at most 9 vertices, as well as on all named graphs with at most 100 vertices in the Wolfram Mathematica database, using code available in the GitHub repository~\cite{repo}. No counterexamples have been found.

\section*{Acknowledgments}
 The second author would like to express his sincere gratitude to Prof.~Minghua Lin and Yinchen Liu for numerous helpful discussions and valuable suggestions. He is also grateful to Prof.~Leonid Chekhov and Prof.~Michael Shapiro at Michigan State University for their support.  

This material is based upon work supported by the National Science
Foundation under Grant No. DMS-1928930, while the third author was in
residence at the Simons Laufer Mathematical Sciences Institute in
Berkeley, California, during the Spring 2025 semester.

 The authors would like to thank Prof.~Aida Abiad for her comments on an earlier version of this paper. The authors would also like to thank Prof.~Terence Tao for helpful comments on Lemma~\ref{lem:RHS}.


\begin{thebibliography}{99}
\bibitem{Akbari2025}
S. Akbari, H. Kumar, B. Mohar, and S. Pragada, Vertex Partitioning and \( p \)-Energy of Graphs, \textit{arXiv preprint}, arXiv:2503.16882, 2025.


\bibitem{Ando2015}
T. Ando and M. Lin, Proof of a conjectured lower bound on the chromatic number of a graph, \textit{Linear Algebra Appl.}, \textbf{485} (2015): 480--484.

\bibitem{Arizmendi2023}
G. Arizmendi, O. Arizmendi, The graph energy game, \textit{Discrete Appl. Math.}, \textbf{330} (2023): 128--140.


\bibitem{Berndt2007}
R. Berndt, \textit{Representations of Linear Groups: An Introduction Based on Examples from Physics and Number Theory}, Springer Science \& Business Media, 2007.


\bibitem{Bhatia13}
R.~Bhatia, \textit{Matrix Analysis}, vol.~169, Springer Science \& Business Media, 2013.



\bibitem{Bilu2006}
Y. Bilu, Tales of Hoffman: Three extensions of Hoffman’s bound on the chromatic number, \textit{J. Combin. Theory Ser. B}, \textbf{96} (2006): 608--613.



\bibitem{cameron07}
P. J. Cameron, A. Montanaro, M. W. Newman, S. Severini, and A. Winter, On the quantum chromatic number of a graph, \textit{Electron. J. Combin.}, \textbf{14} (2007): \#R81.


\bibitem{Ciardo2025}
L. Ciardo, On the quantum chromatic gap, \textit{arXiv preprint}, arXiv:2503.23207, 2025. 


\bibitem{Coutinho2024}
G. Coutinho, T. Spier and S. Zhang, Conic programming to understand sums of squares of eigenvalues of graphs, arXiv preprint, arXiv:2411.08184, 2024. 

\bibitem{Elphick2025}
C. Elphick, Q. Tang, and S. Zhang, A spectral lower bound on the chromatic number using \( p \)-energy, \textit{arXiv preprint}, arXiv:2504.01295v3, 2025.

\bibitem{Elphick2016}
C. Elphick, P. Wocjan, J. A. Farber, and D. A. Goldberg, Conjectured bounds for the sum of squares of positive eigenvalues of a graph, \textit{Discrete Math.}, \textbf{339(9)} (2016): 2215--2223.

\bibitem{Elphick2017}
C. Elphick and P. Wocjan, An inertial lower bound for the chromatic number of a graph, \textit{Electron. J. Combin.}, \textbf{24(1)} (2017): \#P1.58.

\bibitem{Elphick2019}
C. Elphick and P. Wocjan, Spectral lower bounds for the quantum chromatic number of a graph, \textit{J. Combin. Theory Ser. A}, \textbf{168} (2019): 338--347.

\bibitem{Galtman2000}
A. Galtman, Spectral characterizations of the Lov{\'a}sz number and the Delsarte number of a graph, \textit{J. Algebraic Combin.}, \textbf{12} (2000): 131--143.

\bibitem{Garey1978}
M. R. Garey and D. S. Johnson, \textit{Computers and Intractability: A Guide to the Theory of NP-Completeness}, Freeman, 1978.

\bibitem{Guo2024} K. Guo and S. Spiro, New eigenvalue bound for the fractional chromatic number, \textit{J. Graph Theory}, \textbf{106(1)} (2024): 167--181.

\bibitem{Gutman2001}
I. Gutman, The energy of a graph: Old and new results. In Anton Betten, Axel Kohnert, Reinhard Laue, and Alfred Wassermann, editors, \textit{Algebraic Combinatorics and Applications}, pp. 196--211, Berlin, Heidelberg, 2001. Springer Berlin Heidelberg.

\bibitem{Hastad1999}
J. H{\aa}stad, Clique is hard to approximate within \( n^{1 - \varepsilon} \), \textit{Acta Math.}, \textbf{182} (1999): 105--142.

\bibitem{Hoffman1970}
A. J. Hoffman, On eigenvalues and colourings of graphs, in: \textit{Graph Theory and its Applications}, Academic Press, New York, 1970, pp. 79--91.

\bibitem{Hogben2017}
L.~Hogben, K.~F. Palmowski, D.~E. Roberson, and S. Severini, Orthogonal representations, projective rank, and fractional minimum positive semidefinite rank: connections and new directions, \textit{Electron. J. Linear Algebra}, \textbf{32} (2017): 295--310.

\bibitem{Horn13}
R.~A. Horn and C.~R. Johnson, \textit{Matrix Analysis}, 2nd ed., Cambridge University Press, 2013.

\bibitem{Li2012}
X. Li, Y. Shi and I. Gutman, Graph Energy, \textit{Springer}, New York, 2012.

\bibitem{Mancinska2016}
L. Man\v{c}inska and D.~E. Roberson, Graph homomorphisms, \textit{J. Combin. Theory Ser. B}, \textbf{118} (2016): 228--267.

\bibitem{mancinska162}
L. Man\v{c}inska and D. E. Roberson, Oddities of Quantum Colorings, \emph{Baltic J. Modern Computing}, \textbf{4} (2016): 846--859.

\bibitem{MOA11}
A. W. Marshall, I. Olkin, and B. Arnold, \textit{Inequalities: Theory of Majorization and Its Applications}, 2nd ed., Springer, New York, 2011.



\bibitem{Mele2024}
A. A. Mele, Introduction to Haar measure tools in quantum information: A beginner's tutorial, \textit{Quantum}, \textbf{8} (2024): 1340.


\bibitem{Nikiforov2012}
V. Nikiforov, Extremal norms of graphs and matrices, \textit{J. Math. Sci.}, \textbf{182(2)} (2012): 164--174.

\bibitem{Nikiforov2016}
V. Nikiforov, Beyond graph energy: Norms of graphs and matrices, \textit{Linear Algebra Appl.}, \textbf{506} (2016): 82--138.


\bibitem{Roberson2013}
D.~E. Roberson, Variations on a theme: Graph homomorphisms, Ph.D. thesis, \textit{University of Waterloo}, 2013.

\bibitem{Silva2021}
M. K. de Carli Silva, G. Coutinho, R. Grandsire, An eigenvalue bound for the fractional chromatic number, \textit{arXiv:2106.11121}, 2021.

\bibitem{Tang2024}
Q. Tang, Y. Liu, and W. Wang, On the Positive and Negative $p$-Energies of Graphs under Edge Addition, \textit{arXiv preprint}, arXiv:2410.09830v4, 2025.

\bibitem{Tang2025}
Q. Tang, Y. Liu, and W. Wang, On a conjecture of Nikiforov concerning the minimal \( p \)-energy of connected graphs, \textit{arXiv preprint}, arXiv:2410.16604v4, 2025.

\bibitem{Wocjan2013}
P. Wocjan and C. Elphick, New spectral bounds on the chromatic number encompassing all eigenvalues of the adjacency matrix, \textit{Electron. J. Combin.}, \textbf{20(3)} (2013): \#P39.

\bibitem{Wocjan2019}
P. Wocjan and C. Elphick, Spectral lower bounds for the orthogonal and projective ranks of a graph, \textit{Electron. J. Combin.}, \textbf{26(3)} (2019): \#P3.45.

\bibitem{Wocjan2020}
P. Wocjan, C. Elphick and P. Darbari, Spectral lower bounds for the quantum chromatic number - Part II, \textit{Electron. J. Combin.}, \textbf{27(4)} (2020): \#P4.47.


\bibitem{repo}
\textit{More Tales of Hoffman: bounds for the vector chromatic number of a graph}.\\
\url{https://github.com/aneksteind/MoreTalesOfHoffman}

















\end{thebibliography}
\end{document}